\documentclass[11pt]{article}

\usepackage{epsfig,epsf,fancybox}
\usepackage{amsmath}
\usepackage{mathrsfs}
\usepackage{amssymb}
\usepackage{graphicx}
\usepackage{color}
\usepackage{multirow}

\textheight 8.5truein
\parskip 0.1in
\topmargin 0.25in
\headheight 0in
\headsep 0in
\textwidth 6.8truein
\oddsidemargin  0in
\evensidemargin 0in
\parindent0pt

\newtheorem{theorem}{Theorem}[section]
\newtheorem{corollary}[theorem]{Corollary}
\newtheorem{lemma}[theorem]{Lemma}

\newtheorem{definition}{Definition}[section]

\newtheorem{remark}[theorem]{Remark}
\newtheorem{assumption}[theorem]{Assumption}
\numberwithin{equation}{section}

\newcommand{\st}{\textnormal{s.t.}}

\newcommand{\rank}{\textnormal{rank}}

\,
\,

\newcommand{\RR}{\mathbf R}
\newcommand{\argmin}{\mathop{\rm argmin}}
\newcommand{\LCal}{\mathcal{L}}

\newcommand{\half}{\frac{1}{2}}

\newcommand{\be}{\begin{equation}}
\newcommand{\ee}{\end{equation}}
\newcommand{\ba}{\begin{array}}
\newcommand{\ea}{\end{array}}
\newcommand{\bpm}{\begin{pmatrix}}
\newcommand{\epm}{\end{pmatrix}}

\newcommand{\XCal}{\mathcal{X}}

\newcommand{\etal}{{et al. }}

\begin{document}

\title{On the Global Linear Convergence of the ADMM \\ with Multi-Block Variables}

\author{Tianyi Lin\footnotemark[1]  \and Shiqian Ma\footnotemark[1] \and Shuzhong Zhang\footnotemark[2]}
\renewcommand{\thefootnote}{\fnsymbol{footnote}}
\footnotetext[1]{Department of Systems Engineering and Engineering Management, The Chinese University of Hong Kong, Shatin, New Territories, Hong Kong, China.}
\footnotetext[2]{Department of Industrial and Systems Engineering, University of Minnesota, Minneapolis, MN 55455, USA.}

\date{May 31, 2014, \quad Revised on May 21, 2015}

\maketitle

\begin{abstract}
The alternating direction method of multipliers (ADMM) has been widely used for solving structured convex optimization problems. In particular, the ADMM can solve convex programs that minimize the sum of $N$ convex functions with $N$-block variables linked by some linear constraints. While the convergence of the ADMM for $N=2$ was well established in the literature, it remained an open problem for a long time whether or not the ADMM for $N \ge 3$ is still convergent. Recently, it was shown in \cite{Chen-admm-failure-2013} that without further conditions the ADMM for $N\ge 3$ may actually fail to converge.
In this paper, we show that under some easily verifiable and reasonable  conditions the global linear convergence of the ADMM when $N\geq 3$ can still be assured, which is important since the ADMM is a popular method for solving large scale multi-block optimization models and is known to perform very well in practice even when $N\ge 3$. Our study aims to offer an explanation for this phenomenon.
\vspace{0.8cm}

\noindent {Keywords: Alternating Direction Method of Multipliers, Global Linear Convergence, Convex Optimization}



\end{abstract}

\section{Introduction}

In this paper, we consider the global linear convergence of the standard alternating direction method of multipliers (ADMM) for solving convex minimization problems with $N$-block variables when $N\geq 3$. The problem under consideration can be formulated as
\be\label{prob:P-1}\ba{ll} \min & \tilde{f}_1(x_1) + \tilde{f}_2(x_2) + \cdots + \tilde{f}_N(x_N) \\
                         \st  & A_1 x_1  + A_2 x_2 + \cdots + A_N x_N = b, \\
                              & x_i \in \XCal_i, i=1,\ldots,N, \ea \ee
where $A_i \in \RR^{p\times n_i}$, $b\in\RR^p$, $\XCal_i\subset \RR^{n_i}$ are closed convex sets, and $\tilde{f}_i:\RR^{n_i}\rightarrow\RR^p$ are closed convex functions. Note that the convex constraint $x_i\in \XCal_i$ can be incorporated into the objective using an indicator function, i.e., \eqref{prob:P-1} can be rewritten as
\be\label{prob:P}\ba{ll} \min & {f}_1(x_1) + {f}_2(x_2) + \cdots + {f}_N(x_N) \\
                         \st  & A_1 x_1  + A_2 x_2 + \cdots + A_N x_N = b, \ea \ee
where $f_i(x_i):= \tilde{f}_i(x_i) + \mathbf{1}_i(x_i)$ and
\[\mathbf{1}_i(x_i):= \left\{ \ba{ll} 0 & \mbox{ if } x_i \in \XCal_i \\ +\infty & \mbox{ otherwise.} \ea \right.\]
We thus consider the equivalent reformulation \eqref{prob:P} throughout this paper for the ease of presentation.

For given $(x_2^k,\ldots,x_N^k;\lambda^k)$, a typical iteration of the ADMM for solving \eqref{prob:P} can be summarized as:
\be\label{admm-N}
\left\{\ba{lcl} x_1^{k+1} & := & \argmin_{x_1} \ \LCal_\gamma(x_1,x_2^k,\ldots,x_N^k;\lambda^k) \\
                x_2^{k+1} & := & \argmin_{x_2} \ \LCal_\gamma(x_1^{k+1},x_2,x_3^k,\ldots,x_N^k;\lambda^k) \\
                          & \vdots & \\
                x_N^{k+1} & := & \argmin_{x_N} \ \LCal_\gamma(x_1^{k+1},x_2^{k+1},\ldots,x_{N-1}^{k+1},x_N;\lambda^k) \\
                \lambda^{k+1} & := & \displaystyle\lambda^k - \gamma \left(\sum_{i=1}^N A_i x_i^{k+1} -b\right),           \ea\right. \ee
where
\[\LCal_\gamma(x_1,\ldots,x_N;\lambda) := \sum_{i=1}^N f_i(x_i) - \left\langle \lambda,\sum_{i=1}^NA_ix_i-b \right\rangle + \frac{\gamma}{2} \left\|\sum_{i=1}^NA_ix_i-b \right\|^2 \]
denotes the augmented Lagrangian function of \eqref{prob:P} with $\lambda$ being the Lagrange multiplier and $\gamma>0$ being a penalty parameter. It is noted that in each iteration, the ADMM updates the primal variables $x_1,\ldots,x_N$ in a Gauss-Seidel manner.

When $N=2$, the ADMM \eqref{admm-N} was shown to be equivalent to the Douglas-Rachford operator splitting method that dated back to 1950s for solving variational problems arising from PDEs \cite{Douglas-Rachford-56,Gabay-83}. The convergence of the ADMM \eqref{admm-N} when $N=2$ was thus established in the context of operator splitting methods \cite{Lions-Mercier-79,Eckstein-Bertsekas-1992}. Recently, ADMM has been revisited due to its success in solving structured convex optimization problems arising from sparse and low-rank optimization and related problems (we refer the readers to some recent survey papers for more details, see, e.g., \cite{Boyd-etal-ADM-survey-2011,Eckstein-tutorial-admm}). In \cite{Lions-Mercier-79}, Lions and Mercier showed that the Douglas-Rachford operator splitting method converges linearly under the assumption that some involved monotone operator is both coercive and Lipschitz. Eckstein and Bertsekas \cite{Eckstein-Bertsekas-LP-1990} showed the linear convergence of the ADMM \eqref{admm-N} with $N=2$ for solving linear programs, which depends on a bound on the largest iterate in the course of the algorithm. In a recent work by Deng and Yin \cite{Deng-Yin-2012}, a generalized ADMM was proposed in which some proximal terms were added to the two subproblems in \eqref{admm-N}, and it was shown that this generalized ADMM converges linearly under certain assumptions on the strong convexity of functions $f_1$ and $f_2$, and the rank of $A_1$ and $A_2$. For instance, one sufficient condition suggested in \cite{Deng-Yin-2012} that guarantees the linear convergence of the generalized ADMM is that $f_1$ and $f_2$ are both strongly convex, $\nabla f_2$ is Lipschitz continuous and $A_2$ is of full row rank. Han and Yuan \cite{Han-Yuan-linear-2013} and Boley \cite{Boley-2012} both studied the local linear convergence of ADMM \eqref{admm-N} when $N=2$ for solving quadratic programs. The result in \cite{Han-Yuan-linear-2013} was based on some error bound condition \cite{Luo-Tseng-error-bound-SIOPT-1992}, and the one given in \cite{Boley-2012} was obtained by first writing the ADMM as a matrix recurrence and then performing a spectral analysis on the recurrence. Moreover, it was shown that the ADMM \eqref{admm-N} when $N=2$ converges sublinearly under the simple convexity assumption both in ergodic and non-ergodic sense \cite{He-Yuan-rate-ADM-2012,Monteiro-Svaiter-2010a,He-Yuan-nonergodic-2012}. It should be noted that all the convergence results on the ADMM \eqref{admm-N} discussed above are for the case $N=2$.

While the convergence properties of the ADMM when $N=2$ have been well studied, its convergence when $N\geq 3$ has remained unclear for a very long time. The following includes some recent progresses on this direction. In a recent work by Chen \etal \cite{Chen-admm-failure-2013}, a counter-example was given which shows that without further conditions the ADMM for $N\ge 3$ may actually fail to converge. Existing works that study sufficient conditions ensuring the convergence of ADMM when $N\geq 3$ are briefly summarized as follows. Han and Yuan \cite{Han-Yuan-note-2012} proved the global convergence of ADMM \eqref{admm-N} under the condition that $f_1, \ldots, f_N$ are all strongly convex and $\gamma$ is restricted to certain region.
Recently, Chen, Shen and You \cite{Chen-Shen-You-convergence-2013} and Lin, Ma and Zhang \cite{Lin-Ma-Zhang-admm-sublinear-2014} relaxed this condition to require only $N-1$ functions to be strongly convex and $\gamma$ is restricted to certain region. Lin, Ma and Zhang \cite{Lin-Ma-Zhang-admm-sublinear-2014} also showed that the ADMM \eqref{admm-N} possesses sublinear convergence rate in both ergodic and non-ergodic sense under these conditions. Closely related to \cite{Chen-Shen-You-convergence-2013, Lin-Ma-Zhang-admm-sublinear-2014}, Cai, Han and Yuan \cite{Cai-Han-Yuan-direct-2014} and Li, Sun and Toh \cite{Li-Sun-Toh-convergent-2015} proved the convergence of the 3-block (i.e., $N=3$) ADMM \eqref{admm-N} under the assumption that only one of the functions $f_1$, $f_2$ and $f_3$ is strongly convex, and $\gamma$ is restricted to be smaller than a certain bound. Davis and Yin \cite{Davis-Yin-2015} studied a variant of the 3-block ADMM (see Algorithm 8 in \cite{Davis-Yin-2015}) which requires that $f_1$ is strongly convex and $\gamma$ is smaller than a certain bound to guarantee the convergence. More recently, Lin, Ma and Zhang \cite{Lin-Ma-Zhang-2015} further proposed several alternative approaches to ensure the sublinear convergence rate of \eqref{admm-N} without requiring any function to be strongly convex. Furthermore, Lin, Ma and Zhang \cite{Lin-Ma-Zhang-2015-free-gamma} proved that the 3-block ADMM is globally convergent for any $\gamma>0$ when it is applied to solve a class of regularized least squares decomposition problems.

Along another line, there are some works that study variants of multi-block ADMM \eqref{admm-N}. Hong and Luo \cite{Luo-ADMM-2012} proposed to adopt a small step size when updating the Lagrange multiplier $\lambda^k$ in \eqref{admm-N}, i.e., they suggested that the update for $\lambda^k$, i.e.,
\be\label{update-lambda} \lambda^{k+1} := \lambda^k -\gamma \left(\sum_{i=1}^N A_i x_i^{k+1} -b\right), \ee
be changed to
\be\label{update-lambda-small-step} \lambda^{k+1} := \lambda^k - \alpha \gamma \left(\sum_{i=1}^N A_i x_i^{k+1} -b\right), \ee
where $\alpha>0$ is a small step size. It was shown in \cite{Luo-ADMM-2012} that this variant of ADMM converges linearly under the assumption that certain error bound condition holds and $\alpha$ is bounded by some constant that is related to the error bound condition. He, Tao and Yuan studied multi-block ADMM with additional correction steps \cite{He-Tao-Yuan-MOR-2013,He-Tao-Yuan-2012}. Parallel and Jacobian type variants of ADMM were studied in \cite{Goldfarb-Ma-Ksplit-SIOPT,Deng-admm-2014,He-Hou-Yuan-Jacob-2013}. A randomly permuted ADMM were recently proposed in \cite{Sun-Luo-Ye-random-admm-2015}.

{\bf Our contribution.} In this paper, we show the global linear convergence of ADMM \eqref{admm-N} when $N\geq 3$.
It should be noted that the linear convergence results in \cite{Lions-Mercier-79,Deng-Yin-2012,Han-Yuan-linear-2013,Boley-2012} are for the case $N=2$, while ours consider the case when $N\geq 3$. Moreover, compared with the {\em local} linear convergence results in \cite{Han-Yuan-linear-2013} and \cite{Boley-2012} for $N=2$, we prove the {\em global} linear convergence for $N\geq 3$. Furthermore, our result is for the {\em original standard multi-block ADMM} \eqref{admm-N}, while the one presented in \cite{Luo-ADMM-2012} is a variant of \eqref{admm-N} which replaces \eqref{update-lambda} with \eqref{update-lambda-small-step}.
To the best of our knowledge, our results in this paper are the first global linear convergence results for the {\em original standard multi-block ADMM} \eqref{admm-N} when $N\geq 3$.

The rest of this paper is organized as follows. In Section \ref{sec:pre}, we provide some preliminaries and prove three technical lemmas for the subsequent analysis. In Section \ref{sec:lin-con}, we prove the global linear convergence of ADMM \eqref{admm-N} under three different scenarios. Finally, we conclude the paper in Section \ref{sec:conclusion}.

\section{Preliminaries and Technical Lemmas}\label{sec:pre}

Although lacking convergence guarantee under the standard convexity assumption, it has been observed by many researchers that the Gauss-Seidel multi-block ADMM \eqref{admm-N} often outperforms all its modified versions in practice (see \cite{Sun-Toh-Yang-admm4-2014} and the numerical experiments in \cite{Wang-etal-2013-multi-2}). In particular, the numerical experiments on basis pursuit problem conducted in \cite{Wang-etal-2013-multi-2} show that the Gauss-Seidel multi-block ADMM \eqref{admm-N} is much more efficient than the ADM algorithm in \cite{Yang-Zhang-2009} and the variable splitting ADMMs studied in \cite{Bertsekas-Tsitsiklis-1989,Boyd-etal-ADM-survey-2011,Wang-etal-2013-multi-2}. In this section, we further provide some comparison results on Gauss-Seidel multi-block ADMM \eqref{admm-N} with Jacobian multi-block ADMM to motivate the necessity of studying \eqref{admm-N}. Note that Jacobian multi-block ADMM updates the primal variables in \eqref{admm-N} in a Jacobian manner.
For given $(x_2^k,\ldots,x_N^k;\lambda^k)$, a typical iteration of the Jacobian multi-block ADMM for solving \eqref{prob:P} can be summarized as:
\be\label{admm-N-Jacobian}
\left\{\ba{lcl} x_1^{k+1} & := & \argmin_{x_1} \ \LCal_\gamma(x_1,x_2^k,\ldots,x_N^k;\lambda^k) \\
                x_2^{k+1} & := & \argmin_{x_2} \ \LCal_\gamma(x_1^k,x_2,x_3^k,\ldots,x_N^k;\lambda^k) \\
                          & \vdots & \\
                x_N^{k+1} & := & \argmin_{x_N} \ \LCal_\gamma(x_1^k,x_2^k,\ldots,x_{N-1}^k,x_N;\lambda^k) \\
                \lambda^{k+1} & := & \displaystyle\lambda^k - \gamma \left(\sum_{i=1}^N A_i x_i^{k+1} -b\right).           \ea\right. \ee
Intuitively, the performance of Jacobian ADMM \eqref{admm-N-Jacobian} should be worse than the Gauss-Seidel version \eqref{admm-N}, because the latter one always uses the latest information of the primal variables in the updates. Now that \eqref{admm-N} is known to be  divergent (see \cite{Chen-admm-failure-2013}), it is therefore not surprising that the Jacobian ADMM \eqref{admm-N-Jacobian} may also be divergent. In fact, an example was given in \cite{He-Hou-Yuan-Jacob-2013} showing that the Jacobian ADMM \eqref{admm-N-Jacobian} is divergent even when $N=2$. A variant of the Jacobian ADMM \eqref{admm-N-Jacobian}, called proximal Jacobian ADMM, was recently proposed by Deng \etal \cite{Deng-admm-2014}. The proximal Jacobian ADMM iterates as follows.
\be\label{admm-N-Jacobian-Prox}
\left\{\ba{lcl} x_1^{k+1} & := & \argmin_{x_1} \ \LCal_\gamma(x_1,x_2^k,\ldots,x_N^k;\lambda^k) + \half\|x_1-x_1^k\|_{P_1}^2 \\
                x_2^{k+1} & := & \argmin_{x_2} \ \LCal_\gamma(x_1^k,x_2,x_3^k,\ldots,x_N^k;\lambda^k) + \half\|x_2-x_2^k\|_{P_2}^2\\
                          & \vdots & \\
                x_N^{k+1} & := & \argmin_{x_N} \ \LCal_\gamma(x_1^k,x_2^k,\ldots,x_{N-1}^k,x_N;\lambda^k) + \half\|x_N-x_N^k\|_{P_N}^2\\
                \lambda^{k+1} & := & \displaystyle\lambda^k - \alpha\gamma \left(\sum_{i=1}^N A_i x_i^{k+1} -b\right),           \ea\right. \ee
                where $P_i, i=1,\ldots,N$ are positive semidefinite matrices, and $\alpha>0$ is a dual step size. The $o(1/k)$ iteration complexity of proximal Jacobian ADMM \eqref{admm-N-Jacobian-Prox} was obtained in \cite{Deng-admm-2014} under some conditions on $\gamma$, $\alpha$, and $P_i,i=1,\ldots,N$. In the following we conduct some comparison on the Gauss-Seidel multi-block ADMM \eqref{admm-N}, the proximal Jacobian ADMM (Prox-JADMM) \eqref{admm-N-Jacobian-Prox}, the Jacobian ADMM with correction step (Corr-JADMM) \cite{He-Hou-Yuan-Jacob-2013}, and the variable-splitting ADMM (VSADMM) \cite{Wang-etal-2013-multi-2}.
               We compare the performance of these four algorithms for solving the basis pursuit problem
\be\label{bp}\min \ \|x\|_1, \ \st, Ax=b,\ee
where $A\in \RR^{p\times N}$ and $b\in \RR^p$. Note that \eqref{bp} is in the form of \eqref{prob:P} if we define $f_i(x_i) = |x_i|$, $i=1,\ldots,N$. In our experiments, we used the same experimental settings as in \cite{Deng-admm-2014}: the entries of matrix $A$ were randomly generated following standard Gaussian distribution; the true solution $x^*$ was also generated randomly following standard Gaussian distribution with sparsity level $s$ (number of nonzeros entries of $x^*$); $b$ was then generated by $b=Ax^*+\mathbf{n}$, where   $\mathbf{n}\approx\mathcal{N}(0,\sigma^2\mathbf{I})$ is Gaussian noise with standard deviation $\sigma$.
We chose $\gamma=10/\|b\|_1$. The Matlab codes of Prox-JADMM, Corr-JADMM and VSADMM were downloaded from the author's website of \cite{Deng-admm-2014}\footnote{http://www.math.ucla.edu/$\sim$zhimin.peng/parallel\_ADMM.html}.
We terminated all these four algorithms after running 200 iterations each. As in \cite{Deng-admm-2014}, we set $p=300$, $N=1000$ and $s=60$, and randomly generated 10 instances for both noise free case ($\sigma = 0$) and a noisy case ($\sigma=10^{-3}$). We report the number of iterations needed by these four algorithms to reduce the relative error $\|x-x^*\|/\|x^*\|$ below the given tolerance $\epsilon$ in Table \ref{table:GS-Jacobian}. Note that since the maximum number of iteration is 200, we put 200 in Table \ref{table:GS-Jacobian} for the cases where the algorithm could not reduce the relative error below $\epsilon$ in 200 iterations. From Table \ref{table:GS-Jacobian} we can see that the number of iterations needed by the Gauss-Seidel multi-block ADMM \eqref{admm-N} to reach the targeted accuracy is always smallest among the four compared algorithms (except one instance in the noise free case where none of the four algorithms can achieve the given tolerance in 200 iterations). We also plotted the geometric mean of the relative errors produced by the four algorithms over the 10 random instances in Figure \ref{fig:compare-ADMM}, from which we can see more clearly the advantage of the Gauss-Seidel multi-block ADMM \eqref{admm-N}. We need to remark here that one advantage of the Jacobian type ADMM variants is that the subproblems can be computed in parallel. As a result, if a parallel computing environment is available, then the Jacobian type variants of ADMM: Prox-JADMM and Corr-JADMM, can be faster than the Gauss-Seidel multi-block ADMM \eqref{admm-N}.

\begin{table}\caption{Number of iterations needed by Gauss-Seidel multi-block ADMM (GS) \eqref{admm-N}, proximal Jacobian ADMM (Prox-J) \eqref{admm-N-Jacobian-Prox}, Jacobian ADMM with correction step (Corr-J) \cite{He-Hou-Yuan-Jacob-2013} and variable-splitting ADMM (VS) \cite{Wang-etal-2013-multi-2} to achieve the targeted tolerance $\epsilon$ for basis pursuit problem \eqref{bp}.}
\begin{center}
\begin{small}
\begin{tabular}{|c|cccc|cccc|}\hline
Noise Level & GS & Prox-J & Corr-J & VS & GS & Prox-J & Corr-J & VS \\\hline
 {$\sigma = 0$}  & \multicolumn{4}{|c|}{ $\epsilon = 10^{-4}$} & \multicolumn{4}{|c|}{ $\epsilon = 10^{-6}$} \\\hline
&    69  &  85 &  116 &  200  &  81 &  118  & 145  & 200  \\
&    69 &  101 &  137 &  200  &  80 &  135  & 168  & 200  \\
&    46  &  71 &   83 &  200  &  57 &  109  & 112  & 200  \\
&    64 &  107 &  125 &  200  &  73 &  137  & 154  & 200  \\
&   200 &  200 &  200 &  200  & 200 &  200  & 200  & 200  \\
&   135 &  193 &  200 &  200  & 147 &  200  & 200  & 200  \\
&    76 &  114 &  146 &  200  &  86 &  150  & 169  & 200  \\
&    54 &   85 &  104 &  200  &  65 &  121  & 132  & 200  \\
&    97 &  125 &  165 &  200  & 109 &  173  & 195  & 200  \\
&    34 &   71 &   61 &  200  & 200 &  200  & 200  & 200  \\\hline
 {$\sigma = 10^{-3}$}  & \multicolumn{4}{|c|}{ $\epsilon = 10^{-2}$} & \multicolumn{4}{|c|}{ $\epsilon = 10^{-3}$} \\\hline
& 17  &  33 &   35  & 200 &   52  &  68  &  93  & 200  \\
                                                  & 18  &  33 &   30  & 200 &   35  &  45  &  56  & 200  \\
                                                  & 18  &  34 &   35  & 200 &   22  &  47  &  45  & 200  \\
                                                  & 19  &  45 &   45  & 200 &  125  & 160  & 200  & 200  \\
                                                  & 15  &  33 &   31  & 200 &   19  &  53  &  44  & 200  \\
                                                  & 11  &  34 &   24  & 200 &   43  &  75  &  75  & 200  \\
                                                  & 15  &  34 &   28  & 200 &   75  &  87  & 132  & 200  \\
                                                  & 14  &  32 &   25  & 200 &   43  &  68  &  78  & 200  \\
                                                  & 12  &  35 &   23  & 200 &   32  &  55  &  52  & 200  \\
                                                  & 31  &  45 &   63  & 200 &   71  &  87  & 131  & 200  \\\hline
\end{tabular}\label{table:GS-Jacobian}
\end{small}
\end{center}
\end{table}

\begin{figure}[htbp]
\centering{
\includegraphics[scale = 0.5]{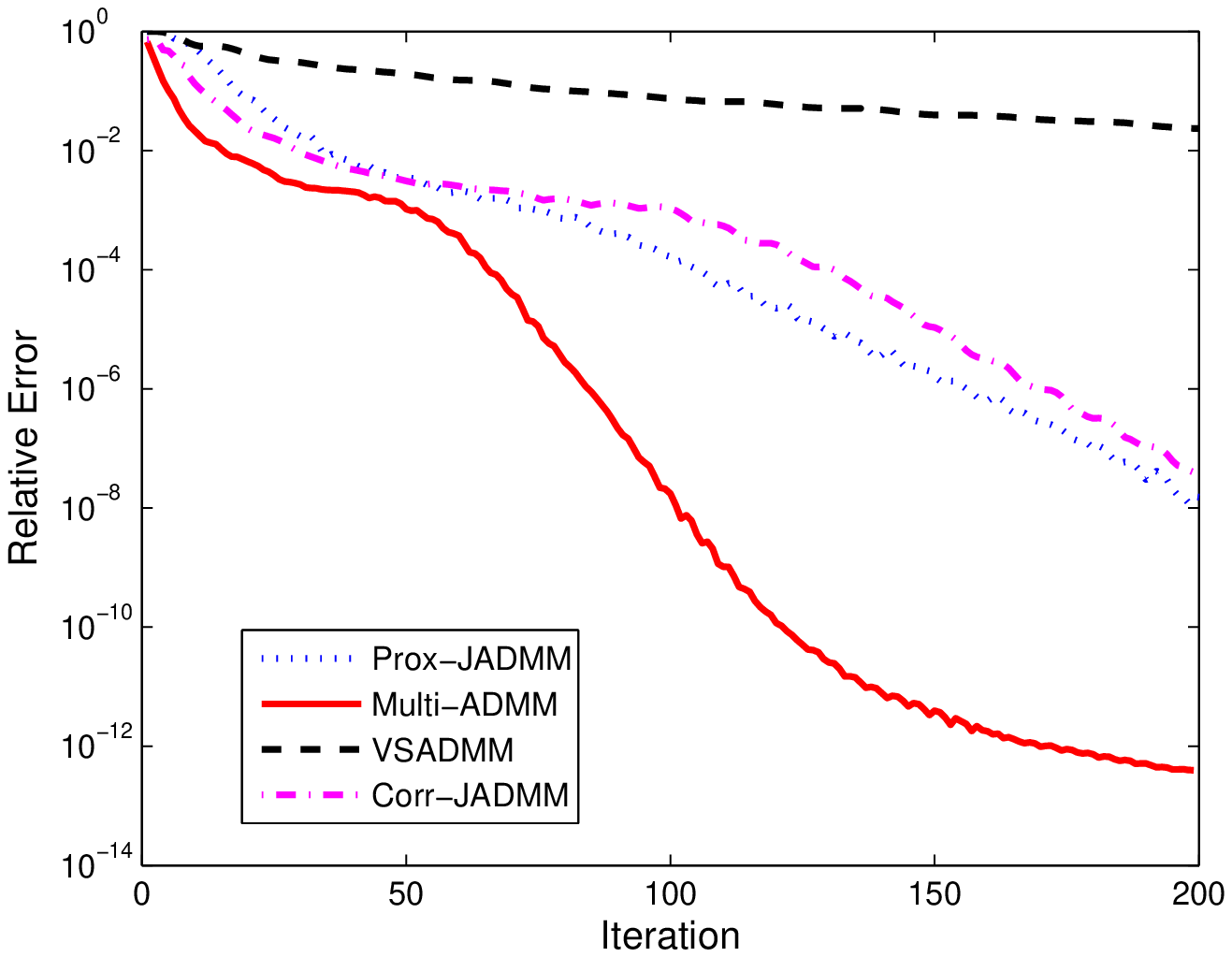}
\includegraphics[scale = 0.5]{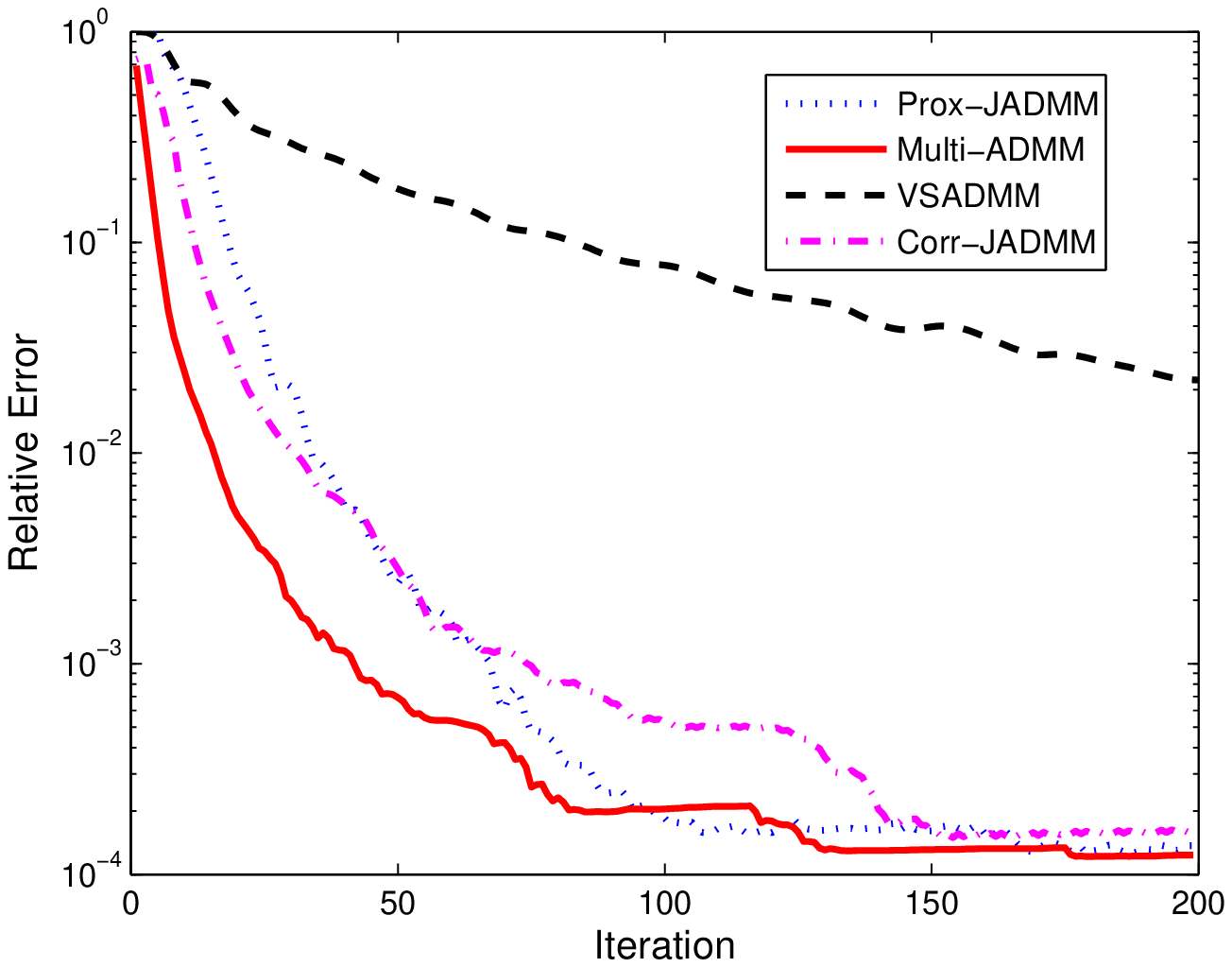}
}\caption{Comparison of the relative errors produced by the four algorithms for solving basis pursuit problem \eqref{bp}. Left: noise free. Right: noise level $\sigma=10^{-3}$. }
\label{fig:compare-ADMM}
\end{figure}

We now turn our attention to analyzing the convergence rate of the Gauss-Seidel multi-block ADMM \eqref{admm-N}.
We use $\Omega^* \subset \XCal_1\times \XCal_2 \times \ldots \times \XCal_N \times \RR^p$ to denote the set of primal-dual optimal solutions of \eqref{prob:P}.  Note that according to the first-order optimality conditions for \eqref{prob:P}, solving \eqref{prob:P} is equivalent to finding $$(x_1^*,\cdots,x_N^*,\lambda^*)\in\Omega^*$$
such that the followings hold:
\begin{align}
& A_{i}^{\top}\lambda^{*}\in\partial f_{i}(x_{i}^{*}), i=1,2,\cdots,N, \label{kkt-1}\\
& \sum\limits_{i=1}^{N}A_{i}x_{i}^{*}-b=0. \label{kkt-2}
\end{align}

We thus make the following assumption throughout this paper.
\begin{assumption}\label{exist} The optimal set $\Omega^*$ for problem \eqref{prob:P} is non-empty.\end{assumption}


In our analysis, the following well-known identity is used frequently:
\begin{eqnarray}\label{identity-4}
 (w_1-w_2)^{\top}(w_3-w_4) = \frac{1}{2}\left(\|w_1-w_4\|^{2}-\|w_1-w_3\|^{2}\right)+\frac{1}{2}\left(\|w_2-w_3\|^{2}-\|w_2-w_4\|^{2}\right).
\end{eqnarray}

{\bf Notations.} We use $g_i$ to denote a subgradient of $f_i$; $\lambda_{\max}(B)$ and $\lambda_{\min}(B)$ denote respectively the largest and smallest eigenvalues of a real symmetric matrix $B$; $\|x\|$ denotes the Euclidean norm of $x$. We use $\sigma_i>0$ to denote the convexity parameter of $f_i$, i.e., the following inequalities hold for $i=1,\ldots,N$:
\begin{eqnarray}\label{f-strong}
(x-y)^\top (g_i(x)-g_i(y)) \geq \sigma_i \|x-y\|^2, \quad \forall x,y\in\XCal_i,
\end{eqnarray}
where $g_i(x)\in \partial f_i(x)$ is a subgradient of $f_i$. Note that
$f_i$ is strongly convex if and only if $\sigma_i>0$, and if $f_i$ is convex but not strongly convex, then $\sigma_i=0$.

In this paper, we consider three scenarios that lead to global linear convergence of ADMM \eqref{admm-N}. The conditions of the three scenarios are listed in Table \ref{tab:3-scenarios}.

\begin{table}[htdp]
\begin{tabular}{c|c|c|c|c}\hline
scenario & strongly convex & Lipschitz continuous & full row rank & full column rank \\ \hline\hline
1 & $f_{2},\cdots,f_{N}$ & $\nabla f_{N}$ & $A_N$ & --- \\ \hline
2 & $f_{1},\cdots,f_{N}$ & $\nabla f_{1},\cdots,\nabla f_{N}$ & --- & --- \\ \hline
3 & $f_{2},\cdots,f_{N}$ & $\nabla f_{1},\cdots,\nabla f_{N}$ & --- & $A_{1}$ \\ \hline
\end{tabular}
\caption{Three scenarios leading to global linear convergence}\label{tab:3-scenarios}
\end{table}

We remark here that when $N=2$, the three scenarios listed in Table \ref{tab:3-scenarios} actually reduce to the same conditions considered by Deng and Yin as scenarios 1, 4 and 3, respectively in \cite{Deng-Yin-2012}. {We also remark here that since we incorporated the indicator functions into the objective function in \eqref{prob:P}, scenario 1 actually requires that there is no constraint $x_N\in\XCal_N$; scenarios 2 and 3 require that there is no constraint  $x_i\in\XCal_i$, $i=1,\ldots,N$. }

The first-order optimality conditions for the $N$ subproblems in \eqref{admm-N} are given by
\begin{eqnarray}\label{opt-x}
A_{i}^{\top}\lambda^{k}-\gamma A_{i}^{\top}\left(\sum\limits_{j=1}^{i}A_{j}x_{j}^{k+1}+\sum\limits_{j=i+1}^{N}A_{j}x_{j}^{k}-b\right)\in\partial f_{i}(x_{i}^{k+1}),\quad i=1,2,\cdots, N,
\end{eqnarray}
where we have adopted the convention $\sum_{j=N+1}^N a_j = 0$.
By combining with the updating formula for $\lambda^k$ \eqref{update-lambda}, \eqref{opt-x} can be rewritten as
\begin{eqnarray}\label{opt-x-lambda}
& A_{i}^{\top}\lambda^{k+1}-\gamma A_{i}^{\top}\left[\sum\limits_{j=i+1}^{N}A_{j}(x_{j}^{k}-x_{j}^{k+1})\right]\in\partial f_{i}(x_{i}^{k+1}),\quad i=1,2,\cdots, N.
\end{eqnarray}

Before we present the linear convergence of ADMM \eqref{admm-N}, we prove the following three technical lemmas that will be used in subsequent analysis.

\begin{lemma}\label{lemma1}
Let $(x_1^*,\ldots,x_N^*,\lambda^*)\in\Omega^*$. The sequence $\{x_1^k, x_2^k, \ldots, x_N^k, \lambda^k\}$ generated via ADMM \eqref{admm-N} satisfies,
\begin{eqnarray}\label{lemma-inequality}
\begin{aligned}
& \left(\frac{\gamma}{2}\sum\limits_{i=1}^{N-1}\left\| \sum\limits_{j=i+1}^{N}A_{j}(x_{j}^{*}-x_{j}^{k})\right\|^{2}+\frac{1}{2\gamma}\|\lambda^*-\lambda^{k}\|^{2}\right)  -\left(\frac{\gamma}{2}\sum\limits_{i=1}^{N-1}\left\| \sum\limits_{j=i+1}^{N}A_{j}(x_{j}^{*}-x_{j}^{k+1})\right\|^{2} +\frac{1}{2\gamma}\|\lambda^*-\lambda^{k+1}\|^{2}\right)\\
\geq & \sum\limits_{i=1}^{N-1}\left[\left(\sigma_i - \frac{\gamma(2N-i)(i-1)}{4}\lambda_{\max}(A_{i}^{\top}A_{i})\right)\| x_{i}^{k+1}-x_{i}^{*}\|^{2}\right]\\
& +\left(\sigma_N-\frac{\gamma(N+1)(N-2)}{4}\lambda_{\max}(A_{N}^{\top}A_{N})\right)\| x_{N}^{k+1}-x_{N}^{*}\|^{2} + \frac{\gamma}{2}\left\|A_1x_1^{k+1}+\sum_{j=2}^N A_j x_j^k-b\right\|^2.
\end{aligned}
\end{eqnarray}
\end{lemma}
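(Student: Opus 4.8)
The plan is to run the standard ``sum the optimality conditions against $x^{k+1}-x^*$'' argument, but to organize the bookkeeping around two shorthand vectors: the optimality gaps $e_i:=A_i(x_i^{k+1}-x_i^*)$ and the primal increments $u_j:=A_j(x_j^k-x_j^{k+1})$. Applying the monotonicity/strong-convexity inequality \eqref{f-strong} to each block $i$ with the subgradient $g_i(x_i^{k+1})$ read off from \eqref{opt-x-lambda} and $g_i(x_i^*)=A_i^{\top}\lambda^*$ from \eqref{kkt-1} gives, after contracting with $A_i$,
\[
\sigma_i\|x_i^{k+1}-x_i^*\|^2 \;\le\; e_i^{\top}(\lambda^{k+1}-\lambda^*)-\gamma\,e_i^{\top}\sum_{j=i+1}^{N}u_j,\qquad i=1,\dots,N.
\]
Summing over $i$ separates into a dual piece $\sum_i e_i^{\top}(\lambda^{k+1}-\lambda^*)$ and a primal cross piece $-\gamma\sum_{i<j}e_i^{\top}u_j$; note this holds for every block regardless of whether $\sigma_i>0$.

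First I would dispose of the dual piece. By \eqref{kkt-2} and the multiplier update one has $\sum_{i=1}^N e_i=-\tfrac1\gamma(\lambda^{k+1}-\lambda^k)$, so this piece equals $-\tfrac1\gamma(\lambda^{k+1}-\lambda^k)^{\top}(\lambda^{k+1}-\lambda^*)$, and identity \eqref{identity-4} turns it into the telescoping dual term $\tfrac1{2\gamma}(\|\lambda^*-\lambda^k\|^2-\|\lambda^*-\lambda^{k+1}\|^2)$ minus a leftover $\tfrac1{2\gamma}\|\lambda^{k+1}-\lambda^k\|^2$. For the primal side, the device that makes everything fit is the elementary relation $A_j(x_j^*-x_j^k)=-e_j-u_j$, which rewrites each Lyapunov summand as $\sum_{j>i}A_j(x_j^*-x_j^{k+1})=-\sum_{j>i}e_j$ and $\sum_{j>i}A_j(x_j^*-x_j^{k})=-\sum_{j>i}(e_j+u_j)$.

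Substituting these and expanding the squares, the Lyapunov difference on the left, combined with the leftover $\tfrac1{2\gamma}\|\lambda^{k+1}-\lambda^k\|^2=\tfrac\gamma2\|\sum_j e_j\|^2$ and with the per-block bound above, reorganizes into $\sum_i\sigma_i\|x_i^{k+1}-x_i^*\|^2$ (matching the $\sigma_i$ terms on the right) plus a residual plus a remainder. The point is to recognize \emph{in advance} that the correct residual to isolate is the first-subproblem residual $A_1x_1^{k+1}+\sum_{j\ge2}A_jx_j^k-b=\sum_j e_j+\sum_{j\ge2}u_j=:E_1+U_1$; once $\tfrac\gamma2\|E_1+U_1\|^2$ is split off, the $i=1$ cross term and $\|U_1\|^2$ cancel exactly, and what survives is the clean sum $\gamma\sum_{i=2}^{N-1}\big(E_i^{\top}U_i+\tfrac12\|U_i\|^2\big)$, where $E_i:=\sum_{j\ge i}e_j$ and $U_i:=\sum_{j>i}u_j$. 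Thus proving the lemma reduces to bounding this remainder below by the negative combinatorial terms.

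That last estimate is routine. Completing the square gives $E_i^{\top}U_i+\tfrac12\|U_i\|^2\ge-\tfrac12\|E_i\|^2$, and then Cauchy--Schwarz together with $\|e_j\|^2\le\lambda_{\max}(A_j^{\top}A_j)\|x_j^{k+1}-x_j^*\|^2$ yields $\|E_i\|^2\le(N-i+1)\sum_{j\ge i}\lambda_{\max}(A_j^{\top}A_j)\|x_j^{k+1}-x_j^*\|^2$. The coefficients $(2N-i)(i-1)$ and $(N+1)(N-2)$ then appear purely by counting: for a fixed block index $j$ one collects the weight $N-i+1$ over all $i=2,\dots,\min(j,N-1)$, and summing the arithmetic series $\sum_{i=2}^{m}(N+1-i)=\tfrac12(m-1)(2N-m)$ with $m=j$ reproduces $(2N-j)(j-1)$ for $j\le N-1$, while $m=N-1$ for $j=N$ gives $(N+1)(N-2)$ (two less, because the index range stops at $N-1$). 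The real obstacle is the middle reorganization: one must track signs and index ranges with care when substituting $A_j(x_j^*-x_j^k)=-e_j-u_j$ and must anticipate the correct residual $\tfrac\gamma2\|E_1+U_1\|^2$ so that the sum collapses to start at $i=2$; everything on either side of that step is mechanical.
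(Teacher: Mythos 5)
Your proposal is correct and follows essentially the same route as the paper: the per-block strong-convexity inequality from \eqref{opt-x-lambda}, \eqref{kkt-1} and \eqref{f-strong}, summation combined with the multiplier update, conversion of the cross terms into differences of squared norms, dropping the nonnegative terms $\tfrac{1}{2}\|E_i+U_i\|^2$ for $i\ge 2$ while retaining the $i=1$ residual, and the same $(N-i+1)$ coefficient count giving $(2N-i)(i-1)$ and $(N+1)(N-2)$. Your $e_i,u_j,E_i,U_i$ bookkeeping with completed squares is just a cleaner repackaging of the paper's application of identity \eqref{identity-4} to partial sums; the terms produced, dropped, and bounded are identical.
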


\begin{proof}
Combining \eqref{opt-x-lambda}, \eqref{kkt-1} and \eqref{f-strong} yields,
\be\label{fi-opt}
(x_{i}^{k+1}-x_{i}^*)^{\top}A_{i}^\top\left(\lambda^{k+1}-\lambda^*-\gamma\sum\limits_{j=i+1}^{N}A_{j}(x_{j}^{k}-x_{j}^{k+1})\right) \geq \sigma_i \| x_{i}^{k+1}-x_{i}^*\|^2, \quad i=1,\ldots,N.
\ee

From \eqref{update-lambda} and \eqref{kkt-2}, it is easy to obtain
\be\label{lambda-opt}
\sum\limits_{i=1}^{N}A_{i}(x_{i}^{k+1}-x_{i}^*)=\frac{1}{\gamma}(\lambda^k-\lambda^{k+1}).
\ee

Summing \eqref{fi-opt} over $i=1,\cdots,N$ and using \eqref{lambda-opt}, we can get
\be\label{sum-1-N}
\frac{1}{\gamma}(\lambda^k-\lambda^{k+1})^{\top}(\lambda^{k+1}-\lambda^*)+\gamma\sum\limits_{i=1}^{N-1}(x_{i}^*-x_{i}^{k+1})^{\top}A_{i}^\top\left[ \sum\limits_{j=i+1}^{N}A_{j}(x_{j}^{k}-x_{j}^{k+1})\right] \geq \sum\limits_{i=1}^{N}\sigma_i \|x_{i}^{k+1}-x_{i}^*\|^2.
\ee

By adopting the convention $\sum_{i=1}^0 a_i = 0$, we have that
\begin{eqnarray}
\nonumber & & \sum\limits_{i=1}^{N-1}(x_{i}^*-x_{i}^{k+1})^{\top}A_{i}^\top\left[ \sum\limits_{j=i+1}^{N}A_{j}(x_{j}^{k}-x_{j}^{k+1})\right] \\
\nonumber & = & \sum\limits_{i=1}^{N-1}\left[\left(\sum\limits_{j=1}^{i}A_{j}x_{j}^*-b\right)-\left(\sum\limits_{j=1}^{i-1}A_{j}x_{j}^*+A_{i}x_{i}^{k+1}-b\right)\right]^{\top}
\left[\left(-\sum\limits_{j=i+1}^{N}A_{j}x_{j}^{k+1}\right)-\left(-\sum\limits_{j=i+1}^{N}A_{j}x_{j}^{k}\right)\right] \\
\nonumber & = & \sum\limits_{i=1}^{N-1}\left[\frac{1}{2}\left(\left\| \sum\limits_{j=1}^{i}A_{j}x_{j}^*+\sum\limits_{j=i+1}^{N}A_{j}x_{j}^{k}-b\right\|^{2}-\left\| \sum\limits_{j=1}^{i}A_{j}x_{j}^*+\sum\limits_{j=i+1}^{N}A_{j}x_{j}^{k+1}-b\right\|^{2}\right)\right.\\
\nonumber &  & \left.+\frac{1}{2}\left(\left\| \sum\limits_{j=1}^{i-1}A_{j}x_{j}^*+\sum\limits_{j=i}^{N}A_{j}x_{j}^{k+1}-b\right\|^{2} -\left\| \sum\limits_{j=1}^{i-1}A_{j}x_{j}^*+A_{i}x_{i}^{k+1}+\sum\limits_{j=i+1}^{N}A_{j}x_{j}^{k}-b\right\|^{2}\right)\right]\\
\nonumber & \leq & \frac{1}{2} \sum\limits_{i=1}^{N-1}\left(\left\| \sum\limits_{j=1}^{i}A_{j}x_{j}^*+\sum\limits_{j=i+1}^{N}A_{j}x_{j}^{k}-b\right\|^{2}-\left\| \sum\limits_{j=1}^{i}A_{j}x_{j}^*+\sum\limits_{j=i+1}^{N}A_{j}x_{j}^{k+1}-b\right\|^{2} \right) \\ \nonumber & & + \frac{1}{2}\sum_{i=1}^{N-1}\left\|\sum\limits_{j=1}^{i-1}A_{j}x_{j}^*+\sum\limits_{j=i}^{N}A_{j}x_{j}^{k+1}-b\right\|^{2} - \frac{1}{2}\left\|A_1x_1^{k+1}+\sum_{j=2}^N A_j x_j^k-b\right\|^2\\
\nonumber & = & \frac{1}{2}\sum\limits_{i=1}^{N-1}\left(\left\| \sum\limits_{j=1}^{i}A_{j}x_{j}^*+\sum\limits_{j=i+1}^{N}A_{j}x_{j}^{k}-b\right\|^{2}-\left\| \sum\limits_{j=1}^{i}A_{j}x_{j}^*+\sum\limits_{j=i+1}^{N}A_{j}x_{j}^{k+1}-b\right\|^{2}\right)\\
&  &  + \frac{1}{2\gamma^{2}}\|\lambda^{k+1}-\lambda^{k}\|^{2} + \frac{1}{2}\sum\limits_{i=2}^{N-1}\left\| \sum\limits_{j=1}^{i-1}A_{j}x_{j}^*+\sum\limits_{j=i}^{N}A_{j}x_{j}^{k+1}-b\right\|^{2}- \frac{1}{2}\left\|A_1x_1^{k+1}+\sum_{j=2}^N A_j x_j^k-b\right\|^2, \label{long-eq}
\end{eqnarray}
where in the second equality we have used the identity \eqref{identity-4}, and the last equality follows from \eqref{update-lambda}.

By combining \eqref{sum-1-N} and \eqref{long-eq}, we have
\begin{eqnarray}\label{add-sum-1-N-long-eq}
\begin{aligned}
& \frac{\gamma}{2}\sum\limits_{i=1}^{N-1}\left(\left\| \sum\limits_{j=1}^{i}A_{j}x_{j}^*+\sum\limits_{j=i+1}^{N}A_{j}x_{j}^{k}-b\right\|^{2}-\left\| \sum\limits_{j=1}^{i}A_{j}x_{j}^*+\sum\limits_{j=i+1}^{N}A_{j}x_{j}^{k+1}-b\right\|^{2}\right)\\
& +\frac{1}{\gamma}(\lambda^k-\lambda^{k+1})^{\top}(\lambda^{k+1}-\lambda^*) + \frac{1}{2\gamma}\|\lambda^{k+1}-\lambda^{k}\|^{2} + \frac{\gamma}{2}\sum\limits_{i=2}^{N-1}\left\| \sum\limits_{j=1}^{i-1}A_{j}x_{j}^*+\sum\limits_{j=i}^{N}A_{j}x_{j}^{k+1}-b\right\|^{2} \\
 \geq & \sum\limits_{i=1}^{N}\sigma_i \|x_{i}^{k+1}-x_{i}^*\|^2 + \frac{\gamma}{2}\left\|A_1x_1^{k+1}+\sum_{j=2}^N A_j x_j^k-b\right\|^2.
\end{aligned}
\end{eqnarray}

Using again \eqref{kkt-2}, we obtain
\begin{eqnarray*}
\left\| \sum\limits_{j=1}^{i-1}A_{j}x_{j}^*+\sum\limits_{j=i}^{N}A_{j}x_{j}^{k+1}-b \right\|^{2} = \left\| \sum\limits_{j=i}^{N}A_{j}(x_{j}^{k+1}-x_{j}^{*})\right\|^{2}\leq (N-i+1)\sum\limits_{j=i}^{N}\lambda_{\max}(A_{j}^{\top}A_{j})\| x_{j}^{k+1}-x_{j}^{*}\|^{2},
\end{eqnarray*}
where the inequality follows from the convexity of $\|\cdot\|^2$.
Therefore, we have
\begin{eqnarray}\label{eigen-bound}
\begin{aligned}
& \sum\limits_{i=2}^{N-1}\left\| \sum\limits_{j=1}^{i-1}A_{j}x_{j}^*+\sum\limits_{j=i}^{N}A_{j}x_{j}^{k+1}-b\right\|^{2} \\
\leq & \sum\limits_{i=2}^{N-1}\left((N-i+1)\sum\limits_{j=i}^{N}\lambda_{\max}(A_{j}^{\top}A_{j})\| x_{j}^{k+1}-x_{j}^{*}\|^{2}\right)\\
= & \sum\limits_{i=2}^{N-1}\frac{(2N-i)(i-1)}{2}\lambda_{\max}(A_{i}^{\top}A_{i})\| x_{i}^{k+1}-x_{i}^{*}\|^{2}+\frac{(N+1)(N-2)}{2}\lambda_{\max}(A_{N}^{\top}A_{N})\| x_{N}^{k+1}-x_{N}^{*}\|^{2}.
\end{aligned}
\end{eqnarray}

By combining \eqref{add-sum-1-N-long-eq} and \eqref{eigen-bound} and using the identity
\begin{eqnarray*}
\frac{1}{\gamma}\left(\lambda^{k}-\lambda^{k+1}\right)^{\top}(\lambda^{k+1}-\lambda^*)+\frac{1}{2\gamma}\|\lambda^{k+1}-\lambda^{k}\|^{2} = \frac{1}{2\gamma}\left(\|\lambda^*-\lambda^{k}\|^{2}-\|\lambda^*-\lambda^{k+1}\|^{2}\right),
\end{eqnarray*}
we have
\begin{eqnarray*}
\begin{aligned}
& \frac{\gamma}{2}\sum\limits_{i=1}^{N-1}\left(\left\| \sum\limits_{j=1}^{i}A_{j}x_{j}^*+\sum\limits_{j=i+1}^{N}A_{j}x_{j}^{k}-b\right\|^{2}-\left\| \sum\limits_{j=1}^{i}A_{j}x_{j}^*+\sum\limits_{j=i+1}^{N}A_{j}x_{j}^{k+1}-b\right\|^{2}\right)\\
& +\frac{1}{2\gamma}\left(\|\lambda^*-\lambda^{k}\|^{2}-\|\lambda^*-\lambda^{k+1}\|^{2}\right)\\
\geq & \sum\limits_{i=1}^{N-1}\left[\left(\sigma_i - \frac{\gamma(2N-i)(i-1)}{4}\lambda_{\max}(A_{i}^{\top}A_{i})\right)\| x_{i}^{k+1}-x_{i}^{*}\|^{2}\right]\\
& +\left(\sigma_N-\frac{\gamma(N+1)(N-2)}{4}\lambda_{\max}(A_{N}^{\top}A_{N})\right)\| x_{N}^{k+1}-x_{N}^{*}\|^{2}+ \frac{\gamma}{2}\left\|A_1x_1^{k+1}+\sum_{j=2}^N A_j x_j^k-b\right\|^2,
\end{aligned}
\end{eqnarray*}
which further implies \eqref{lemma-inequality} by using \eqref{kkt-2}.
\end{proof}

\begin{remark}
We note here that \eqref{lemma-inequality} can be equivalently rearranged as
\begin{eqnarray}\label{lemma-inequality-rewritten}
\begin{aligned}
& \left(\gamma\sum\limits_{i=1}^{N-1}\left\| \sum\limits_{j=i+1}^{N}A_{j}(x_{j}^{*}-x_{j}^{k})\right\|^{2}+\frac{1}{2\gamma}\|\lambda^*-\lambda^{k}\|^{2}\right)-\left(\gamma\sum\limits_{i=1}^{N-1}\left\| \sum\limits_{j=i+1}^{N}A_{j}(x_{j}^{*}-x_{j}^{k+1})\right\|^{2} +\frac{1}{2\gamma}\|\lambda^*-\lambda^{k+1}\|^{2}\right)\\
\geq & \sum\limits_{i=1}^{N-1}\left[\left(\sigma_i - \frac{\gamma(2N-i)(i-1)}{4}\lambda_{\max}(A_{i}^{\top}A_{i})\right)\| x_{i}^{k+1}-x_{i}^{*}\|^{2}\right]\\
& +\left(\sigma_N-\frac{\gamma(N+1)(N-2)}{4}\lambda_{\max}(A_{N}^{\top}A_{N})\right)\| x_{N}^{k+1}-x_{N}^{*}\|^{2}+ \frac{\gamma}{2}\left\|A_1x_1^{k+1}+\sum_{j={2}}^N A_j x_j^k-b\right\|^2\\
& + \frac{\gamma}{2}\sum\limits_{i=1}^{N-1}\left\| \sum\limits_{j=i+1}^{N}A_{j}(x_{j}^{*}-x_{j}^{k})\right\|^{2} - \frac{\gamma}{2}\sum\limits_{i=1}^{N-1}\left\| \sum\limits_{j=i+1}^{N}A_{j}(x_{j}^{*}-x_{j}^{k+1})\right\|^{2}.
\end{aligned}
\end{eqnarray}
Both \eqref{lemma-inequality} and \eqref{lemma-inequality-rewritten} will be used in subsequent analysis. In scenario 1, we will use \eqref{lemma-inequality} to show that \[ \frac{\gamma}{2}\sum\limits_{i=1}^{N-1}\left\| \sum\limits_{j=i+1}^{N}A_{j}(x_{j}^{*}-x_{j}^{k})\right\|^{2}+\frac{1}{2\gamma}\|\lambda^*-\lambda^{k}\|^{2}   \] converges to zero linearly; in scenarios 2 and 3, we will use \eqref{lemma-inequality-rewritten} to show that
\[ \gamma\sum\limits_{i=1}^{N-1}\left\| \sum\limits_{j=i+1}^{N}A_{j}(x_{j}^{*}-x_{j}^{k})\right\|^{2}+\frac{1}{2\gamma}\|\lambda^*-\lambda^{k}\|^{2} \]
converges to zero linearly.
\end{remark}

The next lemma considers the convergence of $\{x_1^k,\ldots,x_N^k,\lambda^k\}$ under conditions listed in scenarios 2 and 3 in Table \ref{tab:3-scenarios}.
\begin{lemma}\label{lem-convergence}
Assume that the conditions listed in scenario 2 or scenario 3 in Table \ref{tab:3-scenarios} hold. Moreover, we assume that $\gamma$ satisfies the following conditions:
\be\label{gamma-def}
\gamma < \min_{i=2,\ldots, N-1}\left\{\frac{4\sigma_{i}}{(2N-i)(i-1)\lambda_{\max}(A_{i}^\top A_{i})}, \frac{4\sigma_{N}}{(N+1)(N-2)\lambda_{\max}(A_{N}^\top A_{N})}\right\}.
\ee
Then
$(x_1^k,\ldots,x_N^k,\lambda^k)$ generated by ADMM \eqref{admm-N} converges to some $({x}_1^*,\ldots,{x}_N^*,{\lambda}^*)\in\Omega^*$.
\end{lemma}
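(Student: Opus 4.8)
The plan is to turn Lemma \ref{lemma1} into a monotone Lyapunov argument: the restriction \eqref{gamma-def} on $\gamma$ is exactly what makes the right-hand side of \eqref{lemma-inequality} nonnegative, so a distance-to-optimum quantity decreases and its decrements are summable. First I would note that \eqref{gamma-def} forces the coefficients
\[ c_i := \sigma_i - \frac{\gamma(2N-i)(i-1)}{4}\lambda_{\max}(A_i^\top A_i), \quad i=2,\ldots,N-1, \qquad c_N := \sigma_N - \frac{\gamma(N+1)(N-2)}{4}\lambda_{\max}(A_N^\top A_N) \]
to be strictly positive, while for $i=1$ the factor $(i-1)$ vanishes so the coefficient is simply $\sigma_1\ge 0$. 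Hence the whole right-hand side of \eqref{lemma-inequality} is nonnegative, and
\[ \Phi^k := \frac{\gamma}{2}\sum_{i=1}^{N-1}\Big\|\sum_{j=i+1}^N A_j(x_j^*-x_j^k)\Big\|^2 + \frac{1}{2\gamma}\|\lambda^*-\lambda^k\|^2 \]
is nonincreasing and bounded. Telescoping \eqref{lemma-inequality} over $k$ gives
\[ \sum_{k\ge 0}\Big[\sum_{i=1}^N c_i\|x_i^{k+1}-x_i^*\|^2 + \frac{\gamma}{2}\Big\|A_1x_1^{k+1}+\sum_{j=2}^N A_j x_j^k-b\Big\|^2\Big] \le \Phi^0 < \infty, \]
so each summand tends to zero. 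Since $c_i>0$ for $i\ge 2$, this already yields $x_i^k\to x_i^*$ for $i=2,\ldots,N$ together with $\big\|A_1x_1^{k+1}+\sum_{j=2}^N A_jx_j^k-b\big\|\to 0$.

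Next I would recover $x_1^k\to x_1^*$. In scenario 2 we have $\sigma_1>0$, so $c_1=\sigma_1>0$ and convergence follows from the summability above. In scenario 3 we have $\sigma_1=0$, so instead I would use \eqref{kkt-2} to write
\[ A_1(x_1^{k+1}-x_1^*) = \Big(A_1x_1^{k+1}+\sum_{j=2}^N A_jx_j^k-b\Big) + \sum_{j=2}^N A_j(x_j^*-x_j^k); \]
both terms on the right tend to zero, and since $A_1$ has full column rank (so $\lambda_{\min}(A_1^\top A_1)>0$) this forces $x_1^k\to x_1^*$ as well.

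With all primal blocks converging, the primal residual $\sum_{i=1}^N A_ix_i^{k+1}-b\to 0$, so by \eqref{update-lambda} the dual increments satisfy $\lambda^{k+1}-\lambda^k\to 0$; boundedness of $\Phi^k$ makes $\{\lambda^k\}$ bounded, hence some subsequence satisfies $\lambda^{k_l}\to\lambda^\infty$. I would then pass to the limit along this subsequence in the subproblem optimality conditions \eqref{opt-x-lambda}: the perturbation terms $\gamma A_i^\top\sum_{j>i}A_j(x_j^k-x_j^{k+1})$ vanish, and since each $\nabla f_i$ is (Lipschitz) continuous we obtain $A_i^\top\lambda^\infty=\nabla f_i(x_i^*)\in\partial f_i(x_i^*)$, which together with \eqref{kkt-2} shows $(x_1^*,\ldots,x_N^*,\lambda^\infty)\in\Omega^*$.

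The closing step is a re-centering trick. Because $(x^*,\lambda^\infty)$ is itself optimal, Lemma \ref{lemma1} applies verbatim with $\lambda^\infty$ in place of $\lambda^*$, so the quantity $\Phi^k_\infty$ obtained by replacing $\lambda^*$ with $\lambda^\infty$ in $\Phi^k$ is again nonincreasing; moreover $\Phi^{k_l}_\infty\to 0$ since $x_j^{k_l}\to x_j^*$ and $\lambda^{k_l}\to\lambda^\infty$. A nonincreasing sequence with a subsequence tending to zero tends to zero, so $\|\lambda^\infty-\lambda^k\|^2\le 2\gamma\,\Phi^k_\infty\to 0$, giving $\lambda^k\to\lambda^\infty$ and hence convergence of the whole iterate sequence to $(x^*,\lambda^\infty)\in\Omega^*$. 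I expect the main obstacle to be precisely the two points where $c_1$ fails to be positive: obtaining $x_1^k\to x_1^*$ in scenario 3, which is why full column rank of $A_1$ is needed, and upgrading the dual sequence from merely \emph{bounded} to \emph{convergent}, which the re-centering at $\lambda^\infty$ resolves by reusing the same monotonicity machinery. The remaining pieces are routine telescoping and continuity arguments.
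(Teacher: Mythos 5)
Your proposal is correct and follows essentially the same route as the paper's proof: the $\gamma$-condition makes the right-hand side of \eqref{lemma-inequality} a nonnegative, summable quantity, giving convergence of $x_i^k$ for $i\ge 2$ and (via $\sigma_1>0$ in scenario 2, or full column rank of $A_1$ in scenario 3) of $x_1^k$, after which boundedness of $\{\lambda^k\}$, a convergent subsequence, and passing to the limit in \eqref{opt-x-lambda} identify a limit multiplier. Your final ``re-centering'' step is exactly the paper's move of choosing $\lambda^*=\bar\lambda$ and reusing the monotonicity of the Lyapunov quantity to upgrade subsequential to full dual convergence.
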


\begin{proof}
Note that the conditions listed in scenarios 2 and 3 in Table \ref{tab:3-scenarios} both require that $f_2,\ldots,f_N$ are strongly convex. Denote the right hand side of inequality \eqref{lemma-inequality} by $\xi^k$. It follows from \eqref{gamma-def} and \eqref{lemma-inequality} that $\xi^k\geq 0$ and $\sum_{k=0}^\infty \xi^k <+\infty$, which further implies that $\xi^k\rightarrow 0$. Hence, for any $({x}_1^*,\ldots,{x}_N^*,{\lambda}^*)\in\Omega^*$, we have $x_i^k-x_i^*\rightarrow 0$ for $i=2,\ldots,N$, and $A_1x_1^{k+1}+\sum_{j=2}^N A_j x_j^k-b\rightarrow 0$, which also implies that $A_1 x_1^k - A_1 x_1^* \rightarrow 0$. In scenario 2, it is assumed that $f_1$ is strongly convex. Thus $\sigma_1>0$ and \eqref{lemma-inequality} implies that $x_1^k-x_1^*\rightarrow 0$. In scenario 3, it is assumed that $A_1$ is of full column rank. It thus follows from $A_1 x_1^k - A_1 x_1^* \rightarrow 0$ that $x_1^k-x_1^*\rightarrow 0$.

Moreover, when \eqref{gamma-def} holds, it follows from \eqref{lemma-inequality} that $\frac{\gamma}{2}\sum_{i=1}^{N-1}\|\sum_{j=i+1}^N A_j(x_j^*-x_j^k)\|^2 + \frac{1}{2\gamma}\|\lambda^*-\lambda^k\|^2$ is non-increasing and upper bounded. It thus follows that $\|\lambda^*-\lambda^k\|^2$ converges and $\{\lambda^k\}$ is bounded. Therefore, $\{\lambda^k\}$ has a converging subsequence $\{\lambda^{k_j}\}$. Let $\bar{\lambda}=\lim_{j\rightarrow\infty}\{\lambda^{k_j}\}$. By passing the limit in \eqref{opt-x-lambda}, it holds that $A_i^\top \bar{\lambda} = \nabla f_i(x_i^*)$ for $i=1,2,\ldots,N$. Thus, $({x}_1^*,\ldots,{x}_N^*,\bar{\lambda})\in\Omega^*$ and we can just let $\lambda^*=\bar{\lambda}$. Since $\|\lambda^*-\lambda^k\|^2$ converges and $\lambda^{k_j}\rightarrow \lambda^*$, we conclude that $\lambda^k\rightarrow \lambda^*$.
\end{proof}

Before proceeding to the next lemma, we define a constant $\kappa $ that will be used subsequently.
\begin{definition}\label{def:kappa}
We define a constant $\kappa $ as follows.
\begin{itemize}
\item (i). If the matrix $[A_{1},\cdots,A_{N}]$ is of full row rank, then $\kappa :=\lambda_{\min}^{-1}([A_{1},\cdots,A_{N}][A_{1},\cdots,A_{N}]^{\top})>0$.
\item (ii). Otherwise, assume $\rank([A_{1},\cdots,A_{N}])=r<p$. Without loss of generality, assuming that the first $r$ rows of $[A_{1},\cdots,A_{N}]$ (denoted by $[A_{1}^r,\cdots,A_{N}^r]$) are linearly independent, we have
\be\label{lemma3-I-L-r}
[A_{1},\cdots,A_{N}]=\left[\begin{array}{c} I \\ B \end{array}\right][A_{1}^r,\cdots,A_{N}^r],
\ee
where $I\in\RR^{r\times r}$ is the identity matrix and $B\in\RR^{(p-r)\times r}$. Let $E:=(I+B^{\top}B)[A_{1}^r,\cdots,A_{N}^r]$. It is easy to see that $E$ has full row rank. Then $\kappa $ is defined as  $\kappa :=\lambda_{\min}^{-1}(EE^{\top})\lambda_{\max}(I+B^{\top}B)>0$.
\end{itemize}
\end{definition}

The next lemma concerns bounding $\|\lambda^{k+1}-\lambda^*\|^2$ using terms related to $x_i^{k}-x_i^*$, $i=1,\ldots,N$.

\begin{lemma}\label{lemma3}
Let $(x_1^*,\ldots,x_N^*,\lambda^*)\in\Omega^*$. Assume that the conditions listed in scenario 2 or scenario 3 in Table \ref{tab:3-scenarios} hold, and $\gamma$ satisfies \eqref{gamma-def}.
Suppose $\nabla f_{i}$ is Lipschitz continuous with constant $L_{i}$ for $i=1,\ldots,N$, and the initial Lagrange multiplier $\lambda^0$ is in the range space of $[A_{1},\cdots,A_{N}]$ (note that letting $\lambda^0=0$ suffices). It holds that
\begin{eqnarray}\label{lemma3-inequality}
\begin{aligned}
\| \lambda^{k+1}-\lambda^*\|^{2} \leq &  \sum\limits_{i=1}^{N} \left(2\kappa  L_{i}^2\right)\| x_{i}^{k+1} - x_{i}^{*} \|^2 \\ &  + \sum\limits_{i=1}^{N-1}\left(4\kappa \gamma^2\lambda_{\max}(A_{i}^{\top}A_{i})\right)\left(\left\|\sum\limits_{j=i+1}^{N}A_{j}(x_{j}^{k}-x_{j}^{*})\right\|^2 + \left\|\sum\limits_{j=i+1}^{N}A_{j}(x_{j}^{k+1}-x_{j}^{*})\right\|^2\right),
\end{aligned}
\end{eqnarray}
where $\kappa  > 0$ is defined in Definition \ref{def:kappa}.
\end{lemma}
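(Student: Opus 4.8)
The plan is to bound $\|\lambda^{k+1}-\lambda^*\|^2$ by first controlling the blockwise quantities $A_i^\top(\lambda^{k+1}-\lambda^*)$ and then converting that control into a bound on $\lambda^{k+1}-\lambda^*$ itself through the constant $\kappa$. Writing $M:=[A_{1},\cdots,A_{N}]$, the stacked blockwise quantities are exactly $M^\top(\lambda^{k+1}-\lambda^*)$, so the heart of the argument is a spectral inequality of the form $\|v\|^2\le\kappa\,\|M^\top v\|^2$ valid for every $v$ lying in the range space of $M$. Throughout I take $\lambda^*$ to be the optimal multiplier lying in the range space of $M$ (the one produced in Lemma \ref{lem-convergence}), which is what will make $\lambda^{k+1}-\lambda^*$ fall in that range space.

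First I would subtract the optimality condition \eqref{kkt-1} (which, since $\nabla f_i$ is Lipschitz and hence $f_i$ differentiable, reads $A_i^\top\lambda^*=\nabla f_i(x_i^*)$) from the subproblem optimality condition \eqref{opt-x-lambda} to obtain, for each $i$,
\[ A_i^\top(\lambda^{k+1}-\lambda^*)=\big(\nabla f_i(x_i^{k+1})-\nabla f_i(x_i^*)\big)+\gamma A_i^\top\Big[\sum_{j=i+1}^{N}A_j(x_j^{k}-x_j^{k+1})\Big]. \]
Taking squared norms, applying $\|a+b\|^2\le 2\|a\|^2+2\|b\|^2$, the Lipschitz bound $\|\nabla f_i(x_i^{k+1})-\nabla f_i(x_i^*)\|\le L_i\|x_i^{k+1}-x_i^*\|$, and the operator-norm bound $\|A_i^\top w\|^2\le\lambda_{\max}(A_i^\top A_i)\|w\|^2$ controls each $\|A_i^\top(\lambda^{k+1}-\lambda^*)\|^2$. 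Splitting $x_j^{k}-x_j^{k+1}=(x_j^{k}-x_j^*)-(x_j^{k+1}-x_j^*)$ and again using $\|a-b\|^2\le 2\|a\|^2+2\|b\|^2$ produces precisely the two terms in $x_j^{k}-x_j^*$ and $x_j^{k+1}-x_j^*$ appearing on the right-hand side of \eqref{lemma3-inequality}; note that the $i=N$ block contributes no such cross term since $\sum_{j=N+1}^{N}=0$. Summing over $i=1,\ldots,N$ bounds $\|M^\top(\lambda^{k+1}-\lambda^*)\|^2=\sum_{i=1}^N\|A_i^\top(\lambda^{k+1}-\lambda^*)\|^2$ by the desired expression with $\kappa$ stripped off.

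It then remains to establish $\|\lambda^{k+1}-\lambda^*\|^2\le\kappa\,\|M^\top(\lambda^{k+1}-\lambda^*)\|^2$. In case (i) of Definition \ref{def:kappa}, $MM^\top$ is invertible and $\|M^\top v\|^2=v^\top MM^\top v\ge\lambda_{\min}(MM^\top)\|v\|^2$ holds for all $v$, which is the claim. In the rank-deficient case (ii) I would use the factorization \eqref{lemma3-I-L-r}: any $v$ in the range of $M$ has the form $v=(u^\top,(Bu)^\top)^\top$ for some $u\in\RR^r$, so $M^\top v=[A_{1}^r,\cdots,A_{N}^r]^\top(I+B^\top B)u=E^\top u$, giving $\|M^\top v\|^2\ge\lambda_{\min}(EE^\top)\|u\|^2$, while $\|v\|^2=u^\top(I+B^\top B)u\le\lambda_{\max}(I+B^\top B)\|u\|^2$; combining the two inequalities yields $\|v\|^2\le\kappa\|M^\top v\|^2$.

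The hard part will be justifying that $\lambda^{k+1}-\lambda^*$ genuinely lies in the range space of $M$, which is exactly what licenses the rank-deficient spectral bound. This is where the hypothesis that $\lambda^0$ lies in the range space of $M$ enters: since $b=\sum_i A_i x_i^*$ lies in the range of $M$, the dual update \eqref{update-lambda} keeps every $\lambda^{k}$ in the range of $M$ by induction, and the limit $\lambda^*$ of Lemma \ref{lem-convergence}, being a limit of such iterates in a closed subspace, lies there too; hence so does their difference. Combining this observation with the spectral inequality applied to $v=\lambda^{k+1}-\lambda^*$ and the blockwise bound of the second paragraph yields \eqref{lemma3-inequality}.
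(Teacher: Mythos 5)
Your proposal is correct and follows essentially the same route as the paper's proof: the same subtraction of \eqref{kkt-1} from \eqref{opt-x-lambda} combined with Lipschitz continuity, the same splitting $x_j^k - x_j^{k+1} = (x_j^k - x_j^*) - (x_j^{k+1}-x_j^*)$ with the inequality $\|a+b\|^2 \le 2\|a\|^2 + 2\|b\|^2$, and the same spectral inequality $\|v\|^2 \le \kappa\|M^\top v\|^2$ on the range space of $M$ via the factorization \eqref{lemma3-I-L-r} (your parametrization $v=(u^\top,(Bu)^\top)^\top$ is just the paper's $\lambda = [I;B]\lambda_r$ in different notation). Your explicit handling of why $\lambda^*$ lies in the range space — as the limit, via Lemma \ref{lem-convergence}, of iterates in a closed subspace — is in fact slightly more careful than the paper's terse ``so does $\lambda^*$.''
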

\begin{proof}
We first show the following inequality
\begin{eqnarray}\label{lambda-cbar}
\| \lambda^{k+1}-\lambda^*\|^{2}\leq\kappa \cdot\left\|\left[\begin{array}{c} A_{1}^{\top}\\ \vdots \\ A_{N}^{\top} \end{array}\right](\lambda^{k+1}-\lambda^*)\right\|^{2}.
\end{eqnarray}
In case (i), $[A_{1},\cdots,A_{N}]$ has full row rank, so \eqref{lambda-cbar} holds trivially. Now we consider case (ii). By the updating formula of $\lambda^{k+1}$ \eqref{update-lambda} and \eqref{kkt-2}, we know that if the initial Lagrange multiplier $\lambda^0$ is in the range space of $[A_{1},\cdots,A_{N}]$, then $\lambda^k, k=1,2,\cdots$, always stay in the range space of $[A_{1},\cdots,A_{N}]$, so does $\lambda^*$. Therefore, from \eqref{lemma3-I-L-r}, we can get
\begin{displaymath}
\lambda^{k+1}=\left[\begin{array}{c} I \\ B \end{array}\right]\lambda_{r}^{k+1}, \quad
\lambda^*=\left[\begin{array}{c} I \\ B \end{array}\right]\lambda_{r}^*, \quad
\left[\begin{array}{c} A_{1}^{\top} \\ \vdots \\ A_{N}^{\top} \end{array}\right](\lambda^{k+1}-\lambda^*) = \left[\begin{array}{c} (A_{1}^r)^{\top} \\ \vdots \\ (A_{N}^r)^{\top} \end{array}\right](I+B^{\top}B)(\lambda_{r}^{k+1}-\lambda_{r}^*),
\end{displaymath}
where $\lambda_r^{k+1}$ and $\lambda_r^*$ denote the first $r$ rows of $\lambda^{k+1}$ and $\lambda^*$, respectively.
Since $E:=(I+B^{\top}B)[A_{1}^r,\cdots,A_{N}^r]$ has full row rank, it now follows that
\[\left\|\left[ \ba{c} A_1^\top \\ \vdots \\ A_N^\top \ea \right] (\lambda^{k+1}-\lambda^*)\right\|^2 = \|E^\top (\lambda_r^{k+1}-\lambda_r^*)\|^2 \geq \lambda_{\min}(EE^\top)\|\lambda_r^{k+1}-\lambda_r^*\|^2 \geq \frac{\lambda_{\min}(EE^\top)}{\lambda_{\max}(I+B^\top B)}\|\lambda^{k+1}-\lambda^*\|^2, \]
which implies \eqref{lambda-cbar}.

Using the optimality conditions \eqref{opt-x-lambda}, and the Lipschitz continuity of $\nabla f_{i}, i=1,\cdots,N$, we have
\begin{eqnarray*}
\begin{aligned}
& \left\|\left[\begin{array}{c} A_{1}^{\top} \\ A_{2}^{\top} \\ \vdots \\ A_{N}^{\top}\end{array}\right](\lambda^{k+1}-\lambda^*)+\left[\begin{array}{c} -\gamma A_{1}^{\top} \\ 0 \\ \vdots \\ 0 \end{array}\right]\left(\sum\limits_{j=2}^{N}A_{j}(x_{j}^{k}-x_{j}^{k+1})\right)+\cdots + \left[\begin{array}{c} 0 \\ \vdots \\ -\gamma A_{N-1}^{\top} \\ 0 \end{array}\right](A_{N}(x_{N}^{k}-x_{N}^{k+1}))\right\|^2 \\
= & \sum\limits_{i=1}^{N}\| \nabla f_{i}(x_{i}^{k+1})- \nabla f_{i}(x_{i}^{*})\|^2\leq\sum\limits_{i=1}^{N} L_{i}^2\| x_{i}^{k+1} - x_{i}^{*} \|^2,
\end{aligned}
\end{eqnarray*}
which together with \eqref{lambda-cbar} implies that
\begin{eqnarray*}
\begin{aligned}
& \| \lambda^{k+1}-\lambda^*\|^{2} \\ \leq & \kappa \cdot\left\|\left[\begin{array}{c} A_{1}^{\top}\\ A_{2}^{\top} \\ \vdots \\ A_{N}^{\top} \end{array}\right](\lambda^{k+1}-\lambda^*)\right\|^{2}\\
\leq & 2 \kappa  \left(\left\|\left[\begin{array}{c} -\gamma A_{1}^{\top} \\ 0 \\ \vdots \\ 0 \end{array}\right]\left(\sum\limits_{j=2}^{N}A_{j}(x_{j}^{k}-x_{j}^{k+1})\right)+\cdots + \left[\begin{array}{c} 0 \\ \vdots \\ -\gamma A_{N-1}^{\top} \\ 0 \end{array}\right](A_{N}(x_{N}^{k}-x_{N}^{k+1}))\right\|^2 + \sum\limits_{i=1}^{N} L_{i}^2\| x_{i}^{k+1} - x_{i}^{*} \|^2\right)\\
\leq & 2\kappa \gamma^2\sum\limits_{i=1}^{N-1}\lambda_{\max}(A_{i}^{\top}A_{i})\left\|\sum\limits_{j=i+1}^{N}A_{j}(x_{j}^{k}-x_{j}^{k+1})\right\|^2 + 2\kappa \sum\limits_{i=1}^{N} L_{i}^2\| x_{i}^{k+1} - x_{i}^{*} \|^2\\
\leq & 4\kappa \gamma^2\sum\limits_{i=1}^{N-1}\lambda_{\max}(A_{i}^{\top}A_{i})\left(\left\|\sum\limits_{j=i+1}^{N}A_{j}(x_{j}^{k}-x_{j}^{*})\right\|^2 + \left\|\sum\limits_{j=i+1}^{N}A_{j}(x_{j}^{k+1}-x_{j}^{*})\right\|^2\right) + 2\kappa \sum\limits_{i=1}^{N} L_{i}^2\| x_{i}^{k+1} - x_{i}^{*} \|^2.
\end{aligned}
\end{eqnarray*}
\end{proof}

\section{Global Linear Convergence of the ADMM}\label{sec:lin-con}

In this section, we prove the global linear convergence of the ADMM \eqref{admm-N} under the three scenarios listed in Table \ref{tab:3-scenarios}.
We note the following inequality,
\be\label{convex-L2}
\sum\limits_{i=1}^{N-1}\left\| \sum\limits_{j=i+1}^{N}A_{j}(x_{j}^{*}-x_{j}^{k+1})\right\|^{2}
\leq  \sum\limits_{i=2}^{N}\left[\frac{(2N-i)(i-1)}{2}\lambda_{\max}(A_{i}^\top A_{i})\| x_{i}^{*}-x_{i}^{k+1}\|^{2}\right],
\ee
which follows from the convexity of $\|\cdot\|^2$. We shall use this inequality in our subsequent analysis.

\subsection{$Q$-linear convergence under scenario 1}

\begin{theorem}\label{theorem:main}
Suppose that the conditions listed in scenario 1 in Table \ref{tab:3-scenarios} hold. If $\gamma$ satisfies \eqref{gamma-def},
then it holds that
\begin{eqnarray}\label{theorem-inequality}
\begin{aligned}
& \left(\frac{\gamma}{2}\sum\limits_{i=1}^{N-1}\left\| \sum\limits_{j=i+1}^{N}A_{j}(x_{j}^{*}-x_{j}^{k})\right\|^{2}+\frac{1}{2\gamma}\|\lambda^*-\lambda^{k}\|^{2}\right)\\
\geq & (1+\delta_1)\left(\frac{\gamma}{2}\sum\limits_{i=1}^{N-1}\left\| \sum\limits_{j=i+1}^{N}A_{j}(x_{j}^{*}-x_{j}^{k+1})\right\|^{2}+\frac{1}{2\gamma}\|\lambda^*-\lambda^{k+1}\|^{2}\right),
\end{aligned}
\end{eqnarray}
where
\be\label{delta-def}
\delta_1 := \min_{i=2,\ldots, N-1}\left\{\frac{4\sigma_i - \gamma(2N-i)(i-1)\lambda_{\max}(A_{i}^{\top}A_{i})}{\gamma(2N-i)(i-1)\lambda_{\max}(A_{i}^\top A_{i})}, \frac{4\gamma\sigma_N- \gamma^2(N+1)(N-2)\lambda_{\max}(A_{N}^{\top}A_{N})}{2\lambda_{\min}^{-1}(A_{N}A_{N}^\top)L_N^{2}+\gamma^2 N(N-1)\lambda_{\max}(A_{N}^\top A_{N})}\right\}.
\ee
Note that it follows from \eqref{gamma-def} that $\delta_1 > 0$.  As a result of \eqref{theorem-inequality}, we conclude that
\[\left(\sum\limits_{j=2}^{N}A_{j}x_{j}^{k}, \sum\limits_{j=3}^{N}A_{j}x_{j}^{k}, \ldots, \sum\limits_{j=N}^{N}A_{j}x_{j}^{k}, \lambda^k\right)\]
converges $Q$-linearly.
\end{theorem}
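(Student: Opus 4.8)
The plan is to treat Lemma \ref{lemma1} as the workhorse and to convert its one-step descent estimate \eqref{lemma-inequality} into the contraction \eqref{theorem-inequality}. Writing
\[
V^k := \frac{\gamma}{2}\sum_{i=1}^{N-1}\left\|\sum_{j=i+1}^{N}A_{j}(x_{j}^{*}-x_{j}^{k})\right\|^{2}+\frac{1}{2\gamma}\|\lambda^*-\lambda^{k}\|^{2},
\]
the left-hand side of \eqref{lemma-inequality} is exactly $V^k-V^{k+1}$, and \eqref{theorem-inequality} is just the statement $V^k-V^{k+1}\ge \delta_1 V^{k+1}$. So the whole task reduces to showing that the right-hand side of \eqref{lemma-inequality} dominates $\delta_1 V^{k+1}$. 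Under scenario 1 the coefficient of $\|x_1^{k+1}-x_1^*\|^2$ there equals $\sigma_1\ge 0$ (the factor $(i-1)$ annihilates the penalty for $i=1$) and the residual $\frac{\gamma}{2}\|A_1x_1^{k+1}+\sum_{j=2}^N A_jx_j^k-b\|^2$ is nonnegative; I would discard both, keeping the cleaner lower bound $V^k-V^{k+1}\ge \sum_{i=2}^{N-1}c_i\|x_i^{k+1}-x_i^*\|^2+c_N\|x_N^{k+1}-x_N^*\|^2$, where $c_i=\sigma_i-\frac{\gamma(2N-i)(i-1)}{4}\lambda_{\max}(A_i^{\top}A_i)$ for $i=2,\ldots,N-1$ and $c_N=\sigma_N-\frac{\gamma(N+1)(N-2)}{4}\lambda_{\max}(A_N^{\top}A_N)$. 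Condition \eqref{gamma-def} is precisely what forces each $c_i>0$.

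Next I would bound $V^{k+1}$ from above by the same squared errors. For its primal part, \eqref{convex-L2} immediately yields $\frac{\gamma}{2}\sum_{i=1}^{N-1}\|\sum_{j=i+1}^N A_j(x_j^*-x_j^{k+1})\|^2\le \sum_{i=2}^{N}d_i\|x_i^*-x_i^{k+1}\|^2$ with $d_i:=\frac{\gamma(2N-i)(i-1)}{4}\lambda_{\max}(A_i^{\top}A_i)$. Thus for each intermediate block $i=2,\ldots,N-1$ the ratio $c_i/d_i$ equals, after clearing the factor $4$, the first expression in the definition \eqref{delta-def} of $\delta_1$. What remains is to absorb both the $i=N$ primal weight $d_N=\frac{\gamma N(N-1)}{4}\lambda_{\max}(A_N^{\top}A_N)$ and the dual part $\frac{1}{2\gamma}\|\lambda^*-\lambda^{k+1}\|^2$ into $\|x_N^{k+1}-x_N^*\|^2$ alone.

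The crux — and the only place the scenario-1 hypotheses on block $N$ enter — is the dual estimate. Here I would use that the optimality condition \eqref{opt-x-lambda} for $i=N$ carries no correction term, since $\sum_{j=N+1}^N(\cdot)=0$; hence $A_N^{\top}\lambda^{k+1}=\nabla f_N(x_N^{k+1})$ and, by \eqref{kkt-1}, $A_N^{\top}\lambda^*=\nabla f_N(x_N^*)$. Subtracting and invoking the Lipschitz continuity of $\nabla f_N$ gives $\|A_N^{\top}(\lambda^{k+1}-\lambda^*)\|^2\le L_N^2\|x_N^{k+1}-x_N^*\|^2$; and since $A_N$ has full row rank, $A_NA_N^{\top}\succ 0$ and $\|A_N^{\top}v\|^2\ge \lambda_{\min}(A_NA_N^{\top})\|v\|^2$, so
\[
\frac{1}{2\gamma}\|\lambda^{k+1}-\lambda^*\|^2\le \frac{\lambda_{\min}^{-1}(A_NA_N^{\top})L_N^2}{2\gamma}\|x_N^{k+1}-x_N^*\|^2.
\]
Adding this to $d_N$ shows the full $N$-block weight in $V^{k+1}$ is at most $\frac{\gamma N(N-1)}{4}\lambda_{\max}(A_N^{\top}A_N)+\frac{\lambda_{\min}^{-1}(A_NA_N^{\top})L_N^2}{2\gamma}$, and a short calculation (multiplying through by $4\gamma$) confirms that $c_N$ divided by this weight is exactly the second expression in \eqref{delta-def}.

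Putting the pieces together, the choice $\delta_1=\min$ of these two families of ratios guarantees $c_i\ge \delta_1 d_i$ for every block, so the term-by-term comparison yields $V^k-V^{k+1}\ge \delta_1 V^{k+1}$, which is \eqref{theorem-inequality}; positivity of $\delta_1$ follows because \eqref{gamma-def} makes each numerator positive. Iterating gives $V^k\le (1+\delta_1)^{-k}V^0$, and since $V^k$ is a fixed positive-definite weighting of the squared error of the tuple $(\sum_{j=2}^N A_jx_j^k,\ldots,A_Nx_N^k,\lambda^k)$ against its optimal value, $Q$-linear convergence of that tuple is immediate. The main obstacle I anticipate is the dual bound: it works only because the last block is the one carrying both the full-row-rank and the Lipschitz condition, so that its optimality condition is free of the $x^k-x^{k+1}$ correction terms that would otherwise couple $\|\lambda^{k+1}-\lambda^*\|$ back to inter-iterate differences and destroy the clean single-block estimate.
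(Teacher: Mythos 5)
Your proposal is correct and follows essentially the same route as the paper's proof: the dual bound $\|\lambda^{k+1}-\lambda^*\|^2\le\lambda_{\min}^{-1}(A_NA_N^{\top})L_N^2\|x_N^{k+1}-x_N^*\|^2$ (via the correction-free $N$-th optimality condition, Lipschitz continuity of $\nabla f_N$, and full row rank of $A_N$) is exactly the paper's inequality \eqref{AN-bound}, and the rest is the same term-by-term comparison of \eqref{lemma-inequality} against \eqref{convex-L2} that produces the two ratio families in \eqref{delta-def}. No gaps; the bookkeeping matching $c_i/d_i$ and $c_N$ over the combined $N$-block weight to the two expressions defining $\delta_1$ checks out.
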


\begin{proof}
Because $\nabla f_N$ is Lipschitz continuous with constant $L_N$, by setting $i=N$ in \eqref{opt-x-lambda} and \eqref{kkt-1}, we get
\begin{eqnarray*}
\| A_{N}^\top (\lambda^{k+1}-\lambda^*)\|^{2} = \| \nabla f_{N}(x_{N}^{k+1})-\nabla f_{N}(x_{N}^{*})\|^{2}\leq L_N^{2}\| x_{N}^{k+1}-x_{N}^*\|^{2},
\end{eqnarray*}
which implies
\begin{eqnarray}\label{AN-bound}
\| \lambda^{k+1}-\lambda^*\|^{2}\leq\lambda_{\min}^{-1}(A_{N}A_{N}^\top)L_N^{2}\| x_{N}^{k+1}-x_{N}^*\|^{2},
\end{eqnarray}
due to the fact that $A_N$ is of full row rank.

By combining \eqref{lemma-inequality}, \eqref{delta-def}, \eqref{convex-L2} and \eqref{AN-bound}, it follows that (note that we do not assume that $f_1$ is strongly convex, and thus $\sigma_1=0$),
\begin{eqnarray*}
\begin{aligned}
& \left(\frac{\gamma}{2}\sum\limits_{i=1}^{N-1}\left\| \sum\limits_{j=i+1}^{N}A_{j}(x_{j}^{*}-x_{j}^{k})\right\|^{2}+\frac{1}{2\gamma}\|\lambda^*-\lambda^{k}\|^{2}\right)  -\left(\frac{\gamma}{2}\sum\limits_{i=1}^{N-1}\left\| \sum\limits_{j=i+1}^{N}A_{j}(x_{j}^{*}-x_{j}^{k+1})\right\|^{2} +\frac{1}{2\gamma}\|\lambda^*-\lambda^{k+1}\|^{2}\right)\\
\geq & \sum\limits_{i=2}^{N-1}\left[\left(\sigma_i - \frac{\gamma(2N-i)(i-1)}{4}\lambda_{\max}(A_{i}^{\top}A_{i})\right)\| x_{i}^{k+1}-x_{i}^{*}\|^{2}\right]\\
& +\left(\sigma_N-\frac{\gamma(N+1)(N-2)}{4}\lambda_{\max}(A_{N}^{\top}A_{N})\right)\| x_{N}^{k+1}-x_{N}^{*}\|^{2} \\
\geq & \delta_1 \left[  \sum\limits_{i=2}^{N}\left[\frac{\gamma(2N-i)(i-1)}{4}\lambda_{\max}(A_{i}^\top A_{i})\| x_{i}^{*}-x_{i}^{k+1}\|^{2}\right] +\frac{\lambda_{\min}^{-1}(A_{N}A_{N}^\top)L_N^{2}}{2\gamma}\| x_{N}^{*}-x_{N}^{k+1}\|^{2}\right] \\
\geq & \delta_1 \left[\frac{\gamma}{2}\sum\limits_{i=1}^{N-1}\left\| \sum\limits_{j=i+1}^{N}A_{j}(x_{j}^{*}-x_{j}^{k+1})\right\|^{2}+\frac{1}{2\gamma}\|\lambda^*-\lambda^{k+1}\|^{2}\right],
\end{aligned}
\end{eqnarray*}
which further implies \eqref{theorem-inequality}.
\end{proof}

\subsection{$Q$-linear convergence under scenario 2}

\begin{theorem}\label{thm2}
Suppose that the conditions listed in scenario 2 in Table \ref{tab:3-scenarios} hold. If $\gamma$ satisfies
\be\label{gamma-def-2}
\gamma < \min_{i=2,\cdots, N-1}\left\{\frac{4\sigma_{i}}{3(2N-i)(i-1)\lambda_{\max}(A_{i}^\top A_{i})}, \frac{4\sigma_{N}}{(3N^2-3N-2)\lambda_{\max}(A_{N}^\top A_{N})}\right\},
\ee
then it holds that
\begin{eqnarray}\label{thm2-inequality}
\begin{aligned}
& \left(\gamma\sum\limits_{i=1}^{N-1}\left\| \sum\limits_{j=i+1}^{N}A_{j}(x_{j}^{*}-x_{j}^{k})\right\|^{2}+\frac{1}{2\gamma}\|\lambda^*-\lambda^{k}\|^{2}\right)\\
\geq & (1+\delta_2)\left(\gamma\sum\limits_{i=1}^{N-1}\left\| \sum\limits_{j=i+1}^{N}A_{j}(x_{j}^{*}-x_{j}^{k+1})\right\|^{2}+\frac{1}{2\gamma}\|\lambda^*-\lambda^{k+1}\|^{2}\right),
\end{aligned}
\end{eqnarray}
where
\be\label{def-delta-2}
\delta_2 := \min\left\{\frac{\sigma_1\gamma}{\kappa L_{1}^2}, \delta_3, \delta_4, \delta_5\right\},
\ee
and
\begin{eqnarray}\label{delta-456}
\left\{
\begin{aligned}
\delta_3   := & \min_{i=2,\ldots,N-1}\left\{\frac{4\sigma_i\gamma - 3\gamma^2 (2N-i)(i-1)\lambda_{\max}(A_i^\top A_i)}{2\gamma^2(2N-i)(i-1)\lambda_{\max}(A_i^\top A_i) + 4\kappa L_i^2}\right\}, \\
\delta_4   := & \min_{i=1,\ldots,N-1}\left\{\frac{1}{4\kappa \lambda_{\max}(A_{i}^{\top}A_{i})}\right\}, \\
\delta_5   := & \frac{4\sigma_N\gamma-(3N^2-3N-2)\gamma^2\lambda_{\max}(A_N^\top A_N)}{2\gamma^2 N(N-1)\lambda_{\max}(A_N^\top A_N)+4\kappa L_N^2},
\end{aligned}
\right.
\end{eqnarray}
where $\kappa $ is defined in Definition \ref{def:kappa}.
Note that it follows from \eqref{gamma-def-2} that $\delta_2 > 0$. As a result of \eqref{thm2-inequality}, we conclude that
\[\left(\sum_{j=2}^N A_jx_j^k,\sum_{j=3}^N A_jx_j^k, \ldots,\sum_{j=N}^N A_jx_j^k,\lambda^k\right) \]
converges $Q$-linearly.
\end{theorem}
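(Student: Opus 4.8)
The plan is to read \eqref{thm2-inequality} as the assertion that the potential $P^k := \gamma\sum_{i=1}^{N-1}\|\sum_{j=i+1}^{N} A_j(x_j^*-x_j^k)\|^2 + \frac{1}{2\gamma}\|\lambda^*-\lambda^k\|^2$ obeys $P^k \geq (1+\delta_2)P^{k+1}$, and to derive this from the rewritten one-step estimate \eqref{lemma-inequality-rewritten} of Lemma \ref{lemma1}, which already has the form $P^k - P^{k+1} \geq \mathrm{RHS}$. Here $\mathrm{RHS}$ collects the strong-convexity terms $c_i\|x_i^{k+1}-x_i^*\|^2$, with $c_i := \sigma_i - \frac{\gamma(2N-i)(i-1)}{4}\lambda_{\max}(A_i^\top A_i)$ for $i\le N-1$ and $c_N := \sigma_N - \frac{\gamma(N+1)(N-2)}{4}\lambda_{\max}(A_N^\top A_N)$ (all $\sigma_i>0$ under scenario 2), the nonnegative feasibility term $\frac{\gamma}{2}\|A_1x_1^{k+1}+\sum_{j=2}^N A_jx_j^k-b\|^2$, and the signed difference $\frac{\gamma}{2}\sum_{i=1}^{N-1}(S_i^k - S_i^{k+1})$, where $S_i^k := \|\sum_{j=i+1}^{N}A_j(x_j^*-x_j^k)\|^2$. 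Everything thus reduces to proving $\mathrm{RHS}\geq\delta_2 P^{k+1}$.

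First I would eliminate the dual part of $P^{k+1}$. Invoking Lemma \ref{lemma3}, bound $\frac{1}{2\gamma}\|\lambda^{k+1}-\lambda^*\|^2$ from above by $\sum_{i=1}^{N}\frac{\kappa L_i^2}{\gamma}\|x_i^{k+1}-x_i^*\|^2 + 2\kappa\gamma\sum_{i=1}^{N-1}\lambda_{\max}(A_i^\top A_i)(S_i^k+S_i^{k+1})$. This expresses $\delta_2 P^{k+1}$ entirely through the three families $\|x_i^{k+1}-x_i^*\|^2$, $S_i^k$ and $S_i^{k+1}$ that also populate $\mathrm{RHS}$, so the inequality becomes a comparison of coefficients. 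The backward terms $S_i^k$ produced by Lemma \ref{lemma3}, with coefficient $2\delta_2\kappa\gamma\lambda_{\max}(A_i^\top A_i)$, are exactly absorbed by the positive slack $\frac{\gamma}{2}S_i^k$ coming from the signed difference, provided $2\delta_2\kappa\gamma\lambda_{\max}(A_i^\top A_i)\leq\frac{\gamma}{2}$, i.e. provided $\delta_2\leq\delta_4$. This is precisely why the theorem uses the $\gamma$-weighted rather than the $\frac{\gamma}{2}$-weighted potential: it is the passage from \eqref{lemma-inequality} to \eqref{lemma-inequality-rewritten} that supplies this backward slack.

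Next I would collapse the forward terms. The coefficient of each $S_i^{k+1}$ on the bad side is $\frac{\gamma}{2}+\delta_2\gamma+2\delta_2\kappa\gamma\lambda_{\max}(A_i^\top A_i)$; using $\delta_2\leq\delta_4$ once more to bound the last summand by $\frac{\gamma}{2}$, this coefficient is at most $\gamma(1+\delta_2)$. Feeding the $S_i^{k+1}$ through the convexity inequality \eqref{convex-L2} turns them into $\|x_i^{k+1}-x_i^*\|^2$ terms carrying weights $\frac{(2N-i)(i-1)}{2}\lambda_{\max}(A_i^\top A_i)$, after which the whole claim is a per-index scalar inequality in $\|x_i^{k+1}-x_i^*\|^2$. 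Matching coefficients index by index gives the three remaining bounds on $\delta_2$: for $i=1$, since $x_1$ enters no $S_i$ sum, the only constraint is $\sigma_1\geq\delta_2\frac{\kappa L_1^2}{\gamma}$, i.e. the term $\sigma_1\gamma/(\kappa L_1^2)$; for $i=2,\ldots,N-1$, combining the $\frac{\gamma(2N-i)(i-1)}{4}$ inside $c_i$ with the $\gamma$- and $\delta_2\gamma$-parts of the weight $\gamma(1+\delta_2)\frac{(2N-i)(i-1)}{2}\lambda_{\max}(A_i^\top A_i)$ reproduces exactly $\delta_3$; and for $i=N$ the aggregate weight $\frac{(N+1)(N-2)}{4}+\frac{N(N-1)}{2}=\frac{3N^2-3N-2}{4}$ reproduces exactly $\delta_5$. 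The feasibility term is nonnegative and may be discarded.

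Finally, setting $\delta_2=\min\{\sigma_1\gamma/(\kappa L_1^2),\delta_3,\delta_4,\delta_5\}$ as in \eqref{def-delta-2} and \eqref{delta-456} meets all four requirements at once, and the restriction \eqref{gamma-def-2} is exactly what keeps the numerators of $\delta_3$ and $\delta_5$ positive, so $\delta_2>0$. This yields $\mathrm{RHS}\geq\delta_2 P^{k+1}$, hence $P^k\geq(1+\delta_2)P^{k+1}$, which is \eqref{thm2-inequality} and the asserted $Q$-linear convergence. I expect the main obstacle to be the bookkeeping in the coefficient-matching step, and in particular the dual role played by $\delta_4$: the single bound $\delta_2\leq 1/(4\kappa\lambda_{\max}(A_i^\top A_i))$ must simultaneously cancel the backward $S_i^k$ terms against the $\gamma$-versus-$\frac{\gamma}{2}$ slack and tame the weighted forward $S_i^{k+1}$ terms down to the clean factor $\gamma(1+\delta_2)$ before \eqref{convex-L2} is applied.
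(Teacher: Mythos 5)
Your proposal is correct and follows essentially the same route as the paper's proof: both combine the rewritten estimate \eqref{lemma-inequality-rewritten} with Lemma \ref{lemma3} and the convexity bound \eqref{convex-L2}, then match coefficients index by index to arrive at exactly the four bounds $\sigma_1\gamma/(\kappa L_1^2)$, $\delta_3$, $\delta_4$, $\delta_5$ defining $\delta_2$, with \eqref{gamma-def-2} guaranteeing positivity. The only difference is trivial bookkeeping order: you merge all forward $S_i^{k+1}$ coefficients into $\gamma(1+\delta_2)$ before applying \eqref{convex-L2}, whereas the paper applies \eqref{convex-L2} to the $(1+\delta_2)\gamma$-weighted terms directly and absorbs the Lemma \ref{lemma3} forward terms into the $\frac{\gamma}{2}$ slack separately, yielding identical constants.
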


\begin{proof}
By combining \eqref{lemma3-inequality} and \eqref{convex-L2}, we have
\begin{eqnarray}\label{2_2}
\begin{aligned}
& (1+\delta_2)\left(\gamma\sum\limits_{i=1}^{N-1}\left\| \sum\limits_{j=i+1}^{N}A_{j}(x_{j}^{*}-x_{j}^{k+1})\right\|^{2}\right)+\delta_2\left(\frac{1}{2\gamma}\|\lambda^*-\lambda^{k+1}\|^{2}\right) \\
\leq & (1+\delta_2)\sum\limits_{i=2}^{N}\left[\frac{\gamma(2N-i)(i-1)}{2}\lambda_{\max}(A_{i}^\top A_{i})\| x_{i}^{*}-x_{i}^{k+1}\|^{2}\right] \\ & + \frac{\delta_2}{2\gamma}\left(\sum\limits_{i=1}^{N-1}\left(4\kappa \gamma^2\lambda_{\max}(A_{i}^{\top}A_{i})\right)\left(\left\|\sum\limits_{j=i+1}^{N}A_{j}(x_{j}^{k}-x_{j}^{*})\right\|^2 + \left\|\sum\limits_{j=i+1}^{N}A_{j}(x_{j}^{k+1}-x_{j}^{*})\right\|^2\right)\right. \\ & \left.+ \sum\limits_{i=1}^{N} \left(2\kappa L_{i}^2\right)\| x_{i}^{k+1} - x_{i}^{*} \|^2 \right)\\
\leq & \sum\limits_{i=1}^{N-1}\left[\left(\sigma_i - \frac{\gamma(2N-i)(i-1)}{4}\lambda_{\max}(A_{i}^{\top}A_{i})\right)\| x_{i}^{k+1}-x_{i}^{*}\|^{2}\right]\\
& +\left(\sigma_N-\frac{\gamma(N+1)(N-2)}{4}\lambda_{\max}(A_{N}^{\top}A_{N})\right)\| x_{N}^{k+1}-x_{N}^{*}\|^{2}\\
& + \frac{\gamma}{2}\sum\limits_{i=1}^{N-1}\left\| \sum\limits_{j=i+1}^{N}A_{j}(x_{j}^{*}-x_{j}^{k})\right\|^{2} + \frac{\gamma}{2}\sum\limits_{i=1}^{N-1}\left\| \sum\limits_{j=i+1}^{N}A_{j}(x_{j}^{*}-x_{j}^{k+1})\right\|^{2},
\end{aligned}
\end{eqnarray}
where the last inequality follows from the definition of $\delta_2$ in \eqref{def-delta-2}.
Finally we note that combining \eqref{2_2} with \eqref{lemma-inequality-rewritten} yields \eqref{thm2-inequality}.
\end{proof}

\subsection{$Q$-linear convergence under scenario 3}

\begin{theorem}\label{thm3}
Suppose that the conditions listed in scenario 3 in Table \ref{tab:3-scenarios} hold. If $\gamma$ satisfies \eqref{gamma-def-2},
then it holds that
\begin{eqnarray}\label{thm3-inequality}
\begin{aligned}
& \left(\gamma\sum\limits_{i=1}^{N-1}\left\| \sum\limits_{j=i+1}^{N}A_{j}(x_{j}^{*}-x_{j}^{k})\right\|^{2}+\frac{1}{2\gamma}\|\lambda^*-\lambda^{k}\|^{2}\right)\\
\geq & (1+\delta_6)\left(\gamma\sum\limits_{i=1}^{N-1}\left\| \sum\limits_{j=i+1}^{N}A_{j}(x_{j}^{*}-x_{j}^{k+1})\right\|^{2}+\frac{1}{2\gamma}\|\lambda^*-\lambda^{k+1}\|^{2}\right),
\end{aligned}
\end{eqnarray}
where
\be\label{def-delta-3}
\delta_6 := \min\left\{\frac{\gamma^2}{4\kappa \gamma^2(N-1)\lambda_{\max}(A_{1}^{\top}A_{1})+ 4\kappa L_{1}^2\lambda_{\min}^{-1}(A_{1}^{\top}A_{1})}, \delta_3,\delta_4,\delta_5\right\},
\ee
with $\delta_3$, $\delta_4$ and $\delta_5$ defined in \eqref{delta-456}.
Note that it follows from \eqref{gamma-def-2} that $\delta_6 > 0$. As a result of \eqref{thm3-inequality}, we conclude that
\[\left(\sum_{j=2}^NA_jx_j^k,\sum_{j=3}^NA_jx_j^k,\ldots,\sum_{j=N}^NA_jx_j^k,\lambda^k \right)\]
converges $Q$-linearly.
\end{theorem}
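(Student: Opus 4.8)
The plan is to mirror the proof of Theorem \ref{thm2} almost verbatim, since scenarios 2 and 3 differ only in how the first block is controlled: in scenario 2 this is done through the strong convexity of $f_1$ (the entry $\sigma_1\gamma/(\kappa L_1^2)$ of $\delta_2$), whereas here $\sigma_1=0$ and we must instead exploit the full column rank of $A_1$. As in Theorem \ref{thm2}, I would start from the rewritten estimate \eqref{lemma-inequality-rewritten}, which I abbreviate as $P_k-P_{k+1}\ge S+B_k+H_k-H_{k+1}$, where $P_k:=\gamma\sum_{i=1}^{N-1}\|\sum_{j=i+1}^N A_j(x_j^*-x_j^k)\|^2+\frac{1}{2\gamma}\|\lambda^*-\lambda^k\|^2$, $H_k:=\frac{\gamma}{2}\sum_{i=1}^{N-1}\|\sum_{j=i+1}^N A_j(x_j^*-x_j^k)\|^2$, $B_k:=\frac{\gamma}{2}\|A_1x_1^{k+1}+\sum_{j=2}^N A_jx_j^k-b\|^2$, and $S$ denotes the $\sigma_i$-weighted terms on the right-hand side of \eqref{lemma-inequality}. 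Since $P_{k+1}=2H_{k+1}+\frac{1}{2\gamma}\|\lambda^*-\lambda^{k+1}\|^2$, the claim \eqref{thm3-inequality} is equivalent to $P_k-P_{k+1}\ge\delta_6 P_{k+1}$, which by the displayed estimate is implied by $S+B_k+H_k\ge(1+2\delta_6)H_{k+1}+\delta_6\cdot\frac{1}{2\gamma}\|\lambda^*-\lambda^{k+1}\|^2$.

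To establish this I would bound the quantities on the left of that inequality exactly as in \eqref{2_2}: apply \eqref{convex-L2} to $H_{k+1}$ to turn it into a combination of $\|x_i^{k+1}-x_i^*\|^2$ over $i=2,\ldots,N$, and apply Lemma \ref{lemma3} (inequality \eqref{lemma3-inequality}) to $\frac{1}{2\gamma}\|\lambda^*-\lambda^{k+1}\|^2$. This produces three families of terms: the blocks $\|x_i^{k+1}-x_i^*\|^2$ for $i=2,\ldots,N$, the ``$x^k$'' quantities $\|\sum_{j=i+1}^N A_j(x_j^k-x_j^*)\|^2$, and a single leftover first-block term $\frac{\delta_6\kappa L_1^2}{\gamma}\|x_1^{k+1}-x_1^*\|^2$ arising from $i=1$ in \eqref{lemma3-inequality}. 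The first family is absorbed into $S$ (here $\sigma_2,\ldots,\sigma_N>0$) exactly as the constants $\delta_3,\delta_5$ dictate, and the ``$x^k$'' family is absorbed into $H_k$ exactly as $\delta_4$ dictates; these constants are identical to those used in $\delta_2$, so those parts of the argument are unchanged from Theorem \ref{thm2}.

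The genuinely new step, and the main obstacle, is disposing of the leftover term $\frac{\delta_6\kappa L_1^2}{\gamma}\|x_1^{k+1}-x_1^*\|^2$ without any strong convexity of $f_1$. Here I would invoke the full column rank of $A_1$, namely $\|x_1^{k+1}-x_1^*\|^2\le\lambda_{\min}^{-1}(A_1^\top A_1)\|A_1(x_1^{k+1}-x_1^*)\|^2$, together with the identity $A_1(x_1^{k+1}-x_1^*)=\big(A_1x_1^{k+1}+\sum_{j=2}^N A_jx_j^k-b\big)-\sum_{j=2}^N A_j(x_j^k-x_j^*)$, which follows from \eqref{kkt-2}. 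Splitting the square then bounds $\|x_1^{k+1}-x_1^*\|^2$ by a multiple of the residual $\|A_1x_1^{k+1}+\sum_{j=2}^N A_jx_j^k-b\|^2$, which is precisely the quantity kept in $B_k$ (and discarded in Theorem \ref{thm2} via $B_k\ge0$), plus a multiple of $\|\sum_{j=2}^N A_j(x_j^k-x_j^*)\|^2$, which is the first ($i=1$) summand of $H_k$. Hence the first-block term is absorbed jointly into $B_k$ and into the $i=1$ part of $H_k$, yielding precisely a requirement of the form given by the first entry of $\delta_6$ in \eqref{def-delta-3}.

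The point requiring care is that the $i=1$ summand of $H_k$ must now do double duty: it has to cover both the ``$x^k$'' contribution from Lemma \ref{lemma3} (the $\delta_4$ bookkeeping) and the new first-block contribution just described. This is exactly why the denominator of the first term of $\delta_6$ couples $\lambda_{\max}(A_1^\top A_1)$ with $L_1^2\lambda_{\min}^{-1}(A_1^\top A_1)$, in place of the $\sigma_1$ appearing in $\delta_2$; choosing $\delta_6$ as the minimum over this first entry together with $\delta_3,\delta_4,\delta_5$ makes all four absorptions simultaneous. With the analogue of \eqref{2_2} (now retaining $B_k$ on its right-hand side) in hand, combining it directly with \eqref{lemma-inequality-rewritten} gives $P_k-P_{k+1}\ge\delta_6 P_{k+1}$, i.e. \eqref{thm3-inequality}; finally, \eqref{gamma-def-2} implies \eqref{gamma-def} (so Lemmas \ref{lem-convergence} and \ref{lemma3} apply) and forces $\delta_6>0$, which delivers the claimed $Q$-linear convergence.
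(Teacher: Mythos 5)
Your proposal matches the paper's proof essentially step for step: the paper likewise works from \eqref{lemma-inequality-rewritten}, bounds $\|\lambda^*-\lambda^{k+1}\|^2$ via Lemma \ref{lemma3} and the $k+1$ cross terms via \eqref{convex-L2}, and handles the first block exactly by your rank argument, namely $\lambda_{\min}(A_1^\top A_1)\|x_1^{k+1}-x_1^*\|^2 \le 2\|A_1x_1^{k+1}+\sum_{j\ge 2}A_jx_j^k-b\|^2+2\|\sum_{j\ge 2}A_j(x_j^k-x_j^*)\|^2$ (the paper's inequality \eqref{lemma4-inequality}), absorbing the two pieces into the retained residual term and the $i=1$ summand of your $H_k$, which is precisely how the first entry of $\delta_6$ in \eqref{def-delta-3} arises. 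The only differences are organizational (the paper folds these bounds into the intermediate display \eqref{3_2} and then combines with \eqref{lemma-inequality-rewritten}, rather than reducing first to your sufficient inequality) plus your glossing over the $\|\sum_{j\ge i+1}A_j(x_j^{k+1}-x_j^*)\|^2$ family from Lemma \ref{lemma3}, which is handled by the same $\delta_4$ bookkeeping you cite from Theorem \ref{thm2}.
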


\begin{proof}
Since $A_1$ is of full column rank, it is easy to verify that
\begin{eqnarray}\label{lemma4-inequality}
\begin{aligned}
\lambda_{\min}(A_1^\top A_1) \|x_{1}^{k+1}-x_{1}^{*}\|^2 \leq & \| A_{1}(x_{1}^{k+1}-x_{1}^{*})\|^{2} \\
= & \left\| \left( A_{1}x_{1}^{k+1}+\sum\limits_{j=2}^{N}A_{j}x_{j}^{k}-b\right)-\left(\sum\limits_{j=2}^{N}A_{j}(x_{j}^{k}-x_{j}^{*})\right)\right\|^2\\
\leq & 2 \left\| A_{1}x_{1}^{k+1}+\sum\limits_{j=2}^{N}A_{j}x_{j}^{k}-b\right\|^2 + 2\left\| \sum\limits_{j=2}^{N}A_{j}(x_{j}^{k}-x_{j}^{*})\right\|^2.
\end{aligned}
\end{eqnarray}

Combining \eqref{lemma4-inequality} and \eqref{lemma3-inequality} yields,
\begin{eqnarray}\label{3_2}
\begin{aligned}
& \frac{1}{2\gamma}\|\lambda^*-\lambda^{k+1}\|^{2}\\
\leq &  \sum\limits_{i=2}^{N} \left(\frac{\kappa L_{i}^2}{\gamma}\right)\| x_{i}^{*} - x_{i}^{k+1} \|^2\\
& + \frac{2}{\gamma}(\kappa L_{1}^2 \lambda_{\min}^{-1}(A_{1}^{\top}A_{1}))\left( \left\| A_{1}x_{1}^{k+1}+\sum\limits_{j=2}^{N}A_{j}x_{j}^{k}-b\right\|^2 + \left\| \sum\limits_{j=2}^{N}A_{j}(x_{j}^{k}-x_{j}^{*})\right\|^2\right)\\
& + \sum\limits_{i=1}^{N-1}\left(2\kappa \gamma(N-1)\lambda_{\max}(A_{i}^{\top}A_{i})\right)\left(\left\|\sum\limits_{j=i+1}^{N}A_{j}(x_{j}^{k}-x_{j}^{*})\right\|^2 + \left\|\sum\limits_{j=i+1}^{N}A_{j}(x_{j}^{k+1}-x_{j}^{*})\right\|^2 \right).
\end{aligned}
\end{eqnarray}
Combining \eqref{3_2}, \eqref{convex-L2} and \eqref{def-delta-3} yields,
\begin{eqnarray*}
\begin{aligned}
& (1+\delta_6)\gamma\sum\limits_{i=1}^{N-1}\left\| \sum\limits_{j=i+1}^{N}A_{j}(x_{j}^{*}-x_{j}^{k+1})\right\|^{2} + \delta_6\frac{1}{2\gamma}\|\lambda^*-\lambda^{k+1}\|^{2} \\
\leq & \sum\limits_{i=1}^{N-1}\left[\left(\sigma_i - \frac{\gamma(2N-i)(i-1)}{4}\lambda_{\max}(A_{i}^{\top}A_{i})\right)\| x_{i}^{k+1}-x_{i}^{*}\|^{2}\right]\\
& +\left(\sigma_N-\frac{\gamma(N+1)(N-2)}{4}\lambda_{\max}(A_{N}^{\top}A_{N})\right)\| x_{N}^{k+1}-x_{N}^{*}\|^{2}\\
& + \frac{\gamma}{2} \left\|A_{1}x_{1}^{k+1}+\sum\limits_{j=2}^{N}A_{j}x_{j}^{k}-b\right\|^{2}\\
& + \frac{\gamma}{2}\sum\limits_{i=1}^{N-1}\left\| \sum\limits_{j=i+1}^{N}A_{j}(x_{j}^{*}-x_{j}^{k})\right\|^{2} + \frac{\gamma}{2}\sum\limits_{i=1}^{N-1}\left\| \sum\limits_{j=i+1}^{N}A_{j}(x_{j}^{*}-x_{j}^{k+1})\right\|^{2},
\end{aligned}
\end{eqnarray*}
which together with \eqref{lemma-inequality-rewritten} implies \eqref{thm3-inequality}.
\end{proof}

\subsection{$R$-linear Convergence}

From the results in Theorems \ref{theorem:main}, \ref{thm2} and \ref{thm3}, we have the following immediate corollary on the $R$-linear convergence of ADMM \eqref{admm-N}.

\begin{corollary}\label{cor:R-linear}
Under the same conditions in Theorem \ref{theorem:main}, or Theorem \ref{thm2}, or Theorem \ref{thm3}, $x_N^k$, $\lambda^k$ and $A_ix_i^k, i =1,\ldots,N-1$ converge $R$-linearly. Moreover, if $A_{i}, i=1,2,\cdots,N-1$ are further assumed to be of full column rank, then $x_{i}^{k}, i=1,2,\cdots,N-1$
converge $R$-linearly. 
\end{corollary}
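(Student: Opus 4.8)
The plan is to deduce the $R$-linear convergence of the individual quantities from the $Q$-linear convergence of the composite (Lyapunov) quantity already established in Theorems \ref{theorem:main}, \ref{thm2} and \ref{thm3}. In each scenario those theorems show that
\[ V^k := c\,\gamma\sum_{i=1}^{N-1}\Bigl\|\sum_{j=i+1}^N A_j(x_j^*-x_j^k)\Bigr\|^2 + \frac{1}{2\gamma}\|\lambda^*-\lambda^k\|^2 \]
(with $c=\tfrac12$ under scenario 1 and $c=1$ under scenarios 2 and 3) satisfies $V^{k+1}\le \tfrac{1}{1+\delta}V^k$ for a positive $\delta$, so that $0\le V^k\le \rho^k V^0$ with $\rho:=\tfrac{1}{1+\delta}\in(0,1)$. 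Since every summand of $V^k$ is nonnegative and bounded above by a fixed multiple of $V^k$, each summand decays like $\rho^k$, which is exactly $R$-linear convergence of its square root. The corollary then follows by writing each target quantity in terms of these summands (and the dual increment).

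First I would treat $\lambda^k$: directly from the definition, $\|\lambda^*-\lambda^k\|^2\le 2\gamma V^k\le 2\gamma\rho^k V^0$, so $\lambda^k\to\lambda^*$ $R$-linearly. Next, for $A_ix_i^k$ with $2\le i\le N-1$, I would use the telescoping identity $A_i(x_i^*-x_i^k)=\sum_{j=i}^N A_j(x_j^*-x_j^k)-\sum_{j=i+1}^N A_j(x_j^*-x_j^k)$; both partial sums on the right are summands of $V^k$ (the indices $i-1$ and $i$ both lie in $\{1,\dots,N-1\}$), so the triangle inequality gives $\|A_i(x_i^*-x_i^k)\|\le C\sqrt{V^k}$ and hence $R$-linear decay. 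The boundary case $i=1$ is the only place where telescoping breaks, since $\sum_{j=1}^N A_j(x_j^*-x_j^k)$ is not a summand of $V^k$; here I would instead invoke the dual update. From \eqref{lambda-opt} one has $\sum_{j=1}^N A_j(x_j^*-x_j^k)=\tfrac1\gamma(\lambda^k-\lambda^{k-1})$, whose norm is $R$-linear by the triangle inequality and the already-established decay of $\|\lambda^k-\lambda^*\|$; subtracting the $i=1$ summand $\sum_{j=2}^N A_j(x_j^*-x_j^k)$ then shows $A_1x_1^k$ converges $R$-linearly as well.

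The remaining and most delicate point is $x_N^k$ itself, since none of the scenarios assumes $A_N$ to be injective, so $A_Nx_N^k\to A_Nx_N^*$ does not by itself control $x_N^k$. Here I would exploit that $f_N$ is strongly convex ($\sigma_N>0$) in all three scenarios. Taking $i=N$ in \eqref{fi-opt}, where the cross term vanishes, yields $\sigma_N\|x_N^{k+1}-x_N^*\|^2\le (x_N^{k+1}-x_N^*)^\top A_N^\top(\lambda^{k+1}-\lambda^*)\le \|A_N(x_N^{k+1}-x_N^*)\|\,\|\lambda^{k+1}-\lambda^*\|$ by Cauchy--Schwarz. The right-hand side is a product of two quantities just shown to decay like $\rho^{(k+1)/2}$ (note $A_N(x_N^{k+1}-x_N^*)$ is precisely the $i=N-1$ summand of $V^{k+1}$), hence it is bounded by a multiple of $V^{k+1}\le V^0\rho^{k+1}$; dividing by $\sigma_N$ and taking square roots gives $R$-linear convergence of $x_N^k$.

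Finally, for the ``moreover'' statement, full column rank of $A_i$ gives $\lambda_{\min}(A_i^\top A_i)>0$ together with the elementary bound $\|x_i^k-x_i^*\|^2\le \lambda_{\min}^{-1}(A_i^\top A_i)\|A_i(x_i^k-x_i^*)\|^2$, so the $R$-linear decay of $A_ix_i^k$ transfers to $x_i^k$ for $i=1,\dots,N-1$. The main obstacle is the interplay at the two boundary indices: handling $A_1x_1^k$, resolved through the dual increment rather than telescoping, and upgrading $A_Nx_N^k$ to $x_N^k$, resolved through strong convexity rather than a rank assumption; once these are in place the rest is routine bookkeeping of constants.
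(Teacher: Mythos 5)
Your proposal is correct and follows essentially the same route as the paper's proof: deduce $R$-linear decay of $\lambda^k$ and of the partial sums $\sum_{j=i+1}^N A_jx_j^k$ from the $Q$-linearly convergent Lyapunov quantity, telescope consecutive partial sums to get $A_ix_i^k$ for $i\ge 2$, recover $A_1x_1^k$ from the dual update \eqref{lambda-opt}, control $x_N^k$ via strong convexity of $f_N$ in \eqref{fi-opt} with $i=N$ plus Cauchy--Schwarz, and use $\lambda_{\min}(A_i^\top A_i)>0$ for the full-column-rank claim. The only cosmetic difference is in the $x_N^k$ step: the paper cancels one factor of $\|x_N^{k+1}-x_N^*\|$ to obtain the direct bound $\|x_N^{k+1}-x_N^*\|\le \frac{\|A_N\|}{\sigma_N}\|\lambda^{k+1}-\lambda^*\|$, whereas you keep $\|A_N(x_N^{k+1}-x_N^*)\|$ and invoke its own $R$-linear decay --- an immaterial variation.
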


\begin{proof}
Note that under all the three scenarios, we have shown that
the sequence
\[\left(\sum_{j=2}^NA_jx_j^k,\sum_{j=3}^NA_jx_j^k,\ldots,\sum_{j=N}^NA_jx_j^k,\lambda^k \right)\]
converges $Q$-linearly. It follows that $\lambda^{k}$ and $\sum\limits_{j=i+1}^{N}A_{j}x_{j}^{k}, i=1,\cdots,N-1$  converge $R$-linearly, since any part of a $Q$-linear convergent quantity converges $R$-linearly. It also implies that $A_2x_2^k,\ldots,A_Nx_N^k$ converge $R$-linearly. It now follows from \eqref{lambda-opt} that $A_{1}x_{1}^{k}$ converges $R$-linearly.
By setting $i = N$ in \eqref{fi-opt}, one obtains,
\[
(x_{N}^{k+1}-x_{N}^*)^{\top}A_{N}^\top(\lambda^{k+1}-\lambda^*) \geq \sigma_N \|x_{N}^{k+1}-x_{N}^*\|^2,
\]
which implies that
\[
\| x_{N}^{k+1}-x_{N}^*\|\| A_{N}\|\|\lambda^{k+1}-\lambda^*\| \geq \sigma_N \|x_{N}^{k+1}-x_{N}^*\|^2,
\]
i.e.,
\[
\|x_{N}^{k+1}-x_{N}^*\| \leq \frac{\| A_{N}\|}{\sigma_{N}}\|\lambda^{k+1}-\lambda^*\|.
\]
The $R$-linear convergence of $x_N^k$ then follows from the fact that $\lambda^k$ converges $R$-linearly.
\end{proof}


{
Now we make some remarks on the convergence results presented in this section.
\begin{remark}
If we incorporate the indicator function into the objective function in \eqref{prob:P}, then its subgradient cannot be Lipschitz continuous on the boundary of the constraint set. Therefore, scenarios 2 and 3 can only occur if the constraint sets $\XCal_i$'s are actually the whole space. However, scenario 1 does allow most of the constraint sets to exist; essentially, it only requires that
$x_N$ is unconstrained, and all other blocks of variables can be constrained.
It remains an interesting question to figure out if the linear convergence rate still holds if all blocks of variables are constrained.

\end{remark}
\begin{remark}
Finally, we remark that the scenario 1 in Table \ref{tab:3-scenarios} also gives rise to a linear convergence rate of the ADMM for convex optimization with {\em inequality}\/ constraints:
\[
\begin{array}{ll}
\min & \tilde{f}_1(x_1) + \tilde{f}_2(x_2) + \cdots + \tilde{f}_N(x_N) \\
\st &  A_1 x_1 + A_2 x_2 + \cdots + A_N x_N \le b \\
& x_i \in {\cal X}_i ,\, i=1,2,\ldots,N.
\end{array}
\]
In that case, by introducing a slack variable $x_0$ with the constraint $x_0 \in \RR^p_+$, the corresponding ADMM becomes
\[
\left\{\ba{lcl}
x_0^{k+1}     & := & \argmin_{x_0 \in \RR^p_+} \, \LCal_\gamma(x_0,x_1^{k},\ldots,x_N^k;\lambda^k) = \left( -\sum_{i=1}^N A_i x^k_i + b + \frac{1}{\gamma} \lambda^k \right)_+ , \\ \\
x_i^{k+1}     & := & \argmin_{x_i \in {\cal X}_i} \, \LCal_\gamma(x_0^{k+1},x_1^{k+1},\ldots,x^{k+1}_{i-1},x_i,x^{k}_{i+1},\cdots, x_N^k;\lambda^k),\, i=1,2,...,N , \\ \\
\lambda^{k+1} & := & \displaystyle\lambda^k - \gamma \left(x^{k+1}_0 + \sum_{i=1}^N A_i x_i^{k+1} -b\right),         \ea\right.
\]
where
\[
\LCal_\gamma(x_0,x_1,\ldots,x_N;\lambda) := \sum_{i=1}^N \tilde{f}_i(x_i) - \left\langle \lambda,x_0+\sum_{i=1}^NA_ix_i-b \right\rangle + \frac{\gamma}{2} \left\|x_0+\sum_{i=1}^NA_ix_i-b \right\|^2 .
\]
Suppose that the functions $\tilde{f}_i$, $i=1,\ldots,N$ are all strongly convex, and $\nabla\tilde{f}_N$ is Lipschitz continuous, $x_N\in\XCal_N$ does not present and $A_N$ has full row rank, Theorem \ref{theorem:main} assures that the above ADMM algorithm converges globally linearly. 
\end{remark}

\subsection{An illustrative example showing the linear convergence of \eqref{admm-N}}

In this section, we implement the ADMM \eqref{admm-N} on a simple example to show its linear convergence under the conditions in scenario 1 in Table \ref{tab:3-scenarios}. The problem under consideration is of the following form:
\be\label{toy-example}\ba{ll} \min & x_2^\top P x_2 + a^\top x_2 + x_3^\top Q x_3 + b^\top x_3 \\
                               \st & x_1 - Ax_2 - x_3 = 0, \\
                                   & x_1 \geq 0, \ea\ee
where $P\in\RR^{n_2\times n_2}$ and $Q\in\RR^{n_3\times n_3}$ are diagonal matrices with all diagonal entries being positive, $A\in\RR^{n_1\times n_2}$. This example is a special case of the problem of utility maximization for smart grid considered in \cite{Samadi-pricing-smart-grid-2010}. It is noted that the problem \eqref{toy-example} satisfies all the conditions given in scenario 1 in Table \ref{tab:3-scenarios}. We now numerically verify that the iterates generated by ADMM \eqref{admm-N} for solving \eqref{toy-example} converge globally linearly. In our experiments, we chose $n_1=n_3=20$, $n_2=50$; the diagonal entries of $P$ and $Q$ were generated randomly following uniform distribution in $[0,1]$; the entries of $A$, $a$ and $b$ were generated following standard Gaussian distribution. Since $\gamma$ is required to satisfy \eqref{gamma-def} in this case, we set $\gamma=0.99\cdot\gamma_{\max}$, where $\gamma_{\max}$ corresponds to the value on the right hand side of \eqref{gamma-def}. To observe the linear convergence behavior of ADMM \eqref{admm-N} for solving \eqref{toy-example}, we ran the ADMM \eqref{admm-N} for 100 iterations, and plotted the figures for $\|x_1-x_1^*\|$, $\|x_2-x_2^*\|$ and $\|x_3-x_3^*\|$, where the optimal solution $(x_1^*,x_2^*,x_3^*)$ was obtained via solving \eqref{toy-example} by the commercial software MOSEK \cite{Mosek}. From our numerical experiments, we observed the linear convergence behavior of ADMM \eqref{admm-N} for all the tested instances. In Figure \ref{fig:linear} we plot the convergence behavior for two specific instances which correspond to setting the seed for random function in Matlab to 0 and 1, respectively. From Figure \ref{fig:linear} we observed that $(x_1,x_2,x_3)$ globally converges to $(x_1^*,x_2^*,x_3^*)$ linearly. We also observed that $\gamma = 1.79\times 10^{-2}, \delta_1 = 1.01\times 10^{-2}$ and $\gamma =1.70\times 10^{-2}, \delta_1 = 1.01\times 10^{-2}$ in these two cases, and thus $\gamma$ and $\delta_1$ are not necessarily to be very small in practice.
\begin{figure}[htbp]
\centering{
\includegraphics[scale = 0.5]{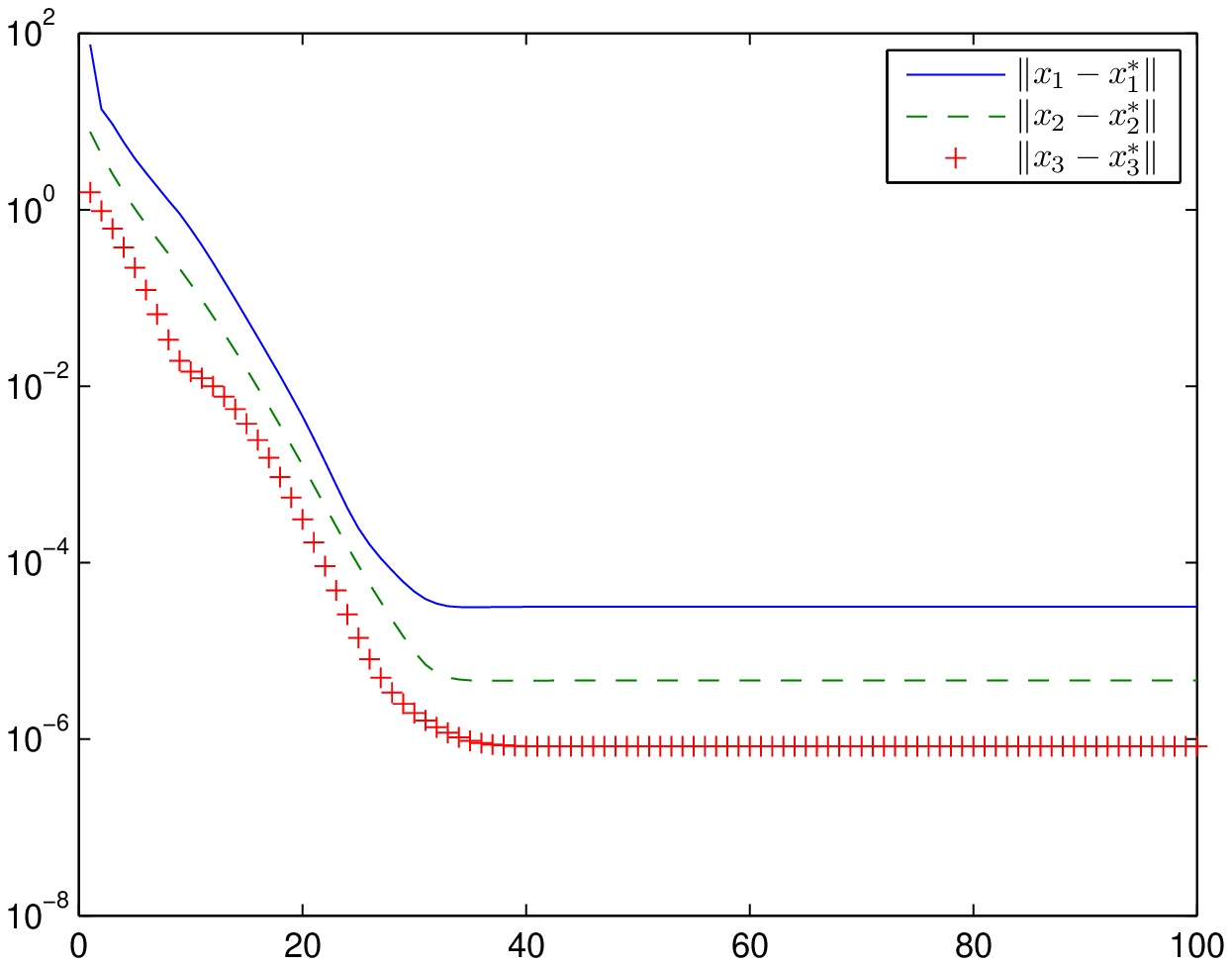}
\includegraphics[scale = 0.5]{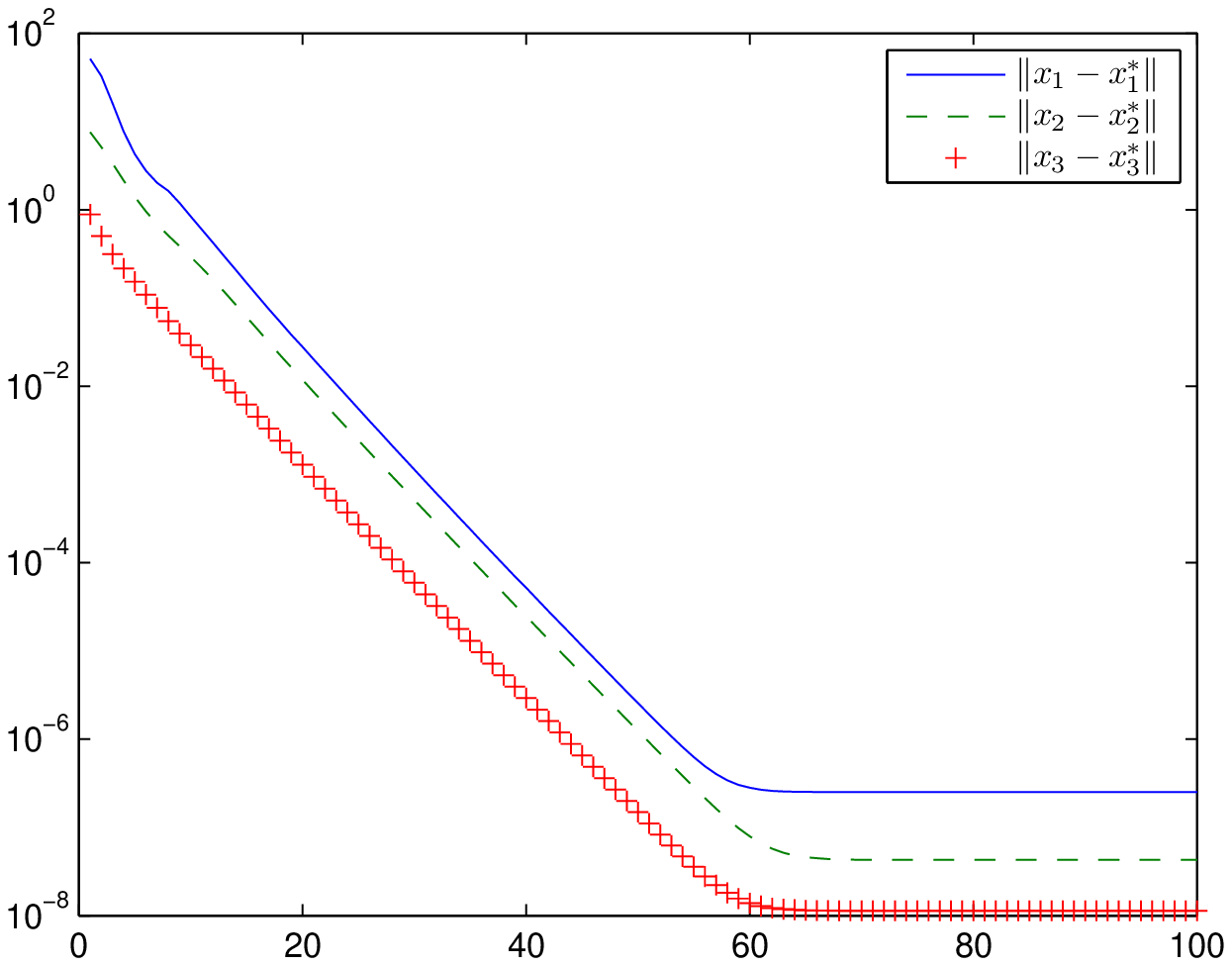}
}\caption{Linear convergence behavior of ADMM \eqref{admm-N} for solving \eqref{toy-example}. Left: seed for random function is set 0, $\gamma = 1.79\times 10^{-2}, \delta_1 = 1.01\times 10^{-2}$; Right: seed for random function is set 1, $\gamma=1.70\times 10^{-2}, \delta_1 = 1.01\times 10^{-2}$. }
\label{fig:linear}
\end{figure}

\section{Conclusions}\label{sec:conclusion}

In this paper we proved that the original ADMM for convex optimization with multi-block variables is linearly convergent under some conditions. In particular, we presented three scenarios under which a linear convergence rate holds for the ADMM; these conditions can be considered as extensions of the ones discussed in \cite{Deng-Yin-2012} for the 2-block ADMM. Convergence and complexity analysis for multi-block ADMM are important because the ADMM is widely used and acknowledged to be an efficient and effective practical solution method for large scale convex optimization models arising from image processing, statistics, machine learning, and so on.

\section*{Acknowledgements}

We would like to thank Mingyi Hong for useful discussions. We are grateful to the associate editor and two anonymous referees for constructive suggestions that improved the presentation of the paper.
Research of Shiqian Ma was supported in part by the Hong Kong Research Grants Council
General Research Fund Early Career Scheme (Project ID: CUHK 439513). Research of Shuzhong Zhang was supported in part by the National Science Foundation under Grant Number CMMI-1161242.

\bibliographystyle{plain}
\bibliography{admm3}

\end{document}